\newtheorem{theorem}{Theorem}[section]
\newtheorem{lemma}[theorem]{Lemma}
\newtheorem{proposition}[theorem]{Proposition}
\newtheorem{corollary}[theorem]{Corollary}
\theoremstyle{definition}
\newtheorem{definition}[theorem]{Definition}
\newtheorem{remark}[theorem]{Remark}
\numberwithin{equation}{section}
\newcounter{smallromans}
\newcounter{smallalphs}
\newenvironment{alphenumerate}
{\begin{list}{{\normalfont\textrm{(\alph{smallalphs})}}}%
    {\usecounter{smallalphs}\setlength{\itemindent}{0cm}%
      \setlength{\leftmargin}{5.5ex}\setlength{\labelwidth}{5.5ex}%
      \setlength{\topsep}{0.2ex}\setlength{\partopsep}{0ex}%
      \setlength{\itemsep}{0.2ex}}}%
  {\end{list}}
\newcommand{\alphref}[1]{{\normalfont\textrm{(\ref{#1})}}}
\renewcommand{\le}{\ensuremath{\leqslant}}
\renewcommand{\ge}{\ensuremath{\geqslant}}
\newcommand{\N}{\mathbb{N}}
\newcommand{\K}{\mathbb{K}}
\renewcommand{\phi}{\ensuremath{\varphi}}
\renewcommand{\epsilon}{\ensuremath{\varepsilon}}
\begin{document}
\title[Uniqueness of the maximal ideal of operators]{Uniqueness of the
  maximal ideal of operators\\ on the $\ell_p$-sum of
  $\ell_\infty^n\ (n\in\N)$ for $1<p<\infty$} \subjclass[2010]%
{Primary 46B45,
46H10,
47L10;
Secondary 
46B08,
47L20}
\author[T.~Kania]{Tomasz Kania}
\address{Institute of Mathematics, Polish Academy
  of Sciences, ul. \'Sniadeckich 8, 00-956 War\-sza\-wa, Poland}
\email{tomasz.marcin.kania@gmail.com}
\author[N.~J.~Laustsen]{Niels Jakob  Laustsen} 
\address{Department of Mathematics and Statistics, Fylde
  College, Lancaster University, Lancaster LA1 4YF, United Kingdom}
\email{n.laustsen@lancaster.ac.uk}

\keywords{Banach algebra; maximal ideal; bounded, linear operator;
  Banach sequence space}
\begin{abstract} 
  A recent result of Leung (\emph{Proceedings of the American
    Mathematical Society}, to appear) states that the Banach algebra
  $\mathscr{B}(X)$ of bounded, linear operators on the Banach space
  $X=\bigl(\bigoplus_{n\in\N}\ell_\infty^n\bigr)_{\ell_1}$ contains a
  unique maximal ideal. We show that the same conclusion holds true
  for the Banach spaces
  $X=\bigl(\bigoplus_{n\in\N}\ell_\infty^n\bigr)_{\ell_p}$ and
  $X=\bigl(\bigoplus_{n\in\N}\ell_1^n\bigr)_{\ell_p}$ when\-ever
  \mbox{$p\in(1,\infty)$}.
\end{abstract}
\maketitle
\section{Introduction and statement of main results}%
\label{section1}
\noindent 
For $p\in[1,\infty)$, consider the Banach space
\begin{equation*}
  W_p=\biggl(\bigoplus_{n\in\N}\ell_\infty^n\biggr)_{\ell_p}.
\end{equation*}
Denny Leung~\cite{leung} has recently proved that the Banach
algebra~$\mathscr{B}(W_1)$ of all (bounded, linear) operators acting
on~$W_1$ has a unique maximal ideal, thus establishing the dual
version of \cite[Theorem~3.2]{losz}. We shall show that Leung's
conclusion extends to~$\mathscr{B}(W_p)$ for $p\in(1,\infty)$ and
to~$\mathscr{B}(W_p^*)$, where
$W_p^*\cong\bigl(\bigoplus_{n\in\N}\ell_1^n\bigr)_{\ell_q}$ is the
dual Banach space of~$W_p$, with $q\in(1,\infty)$ denoting the
conjugate exponent of~$p$. More precisely, using the following piece
of notation
\begin{equation}\label{defnMX}
  \mathscr{M}_X = \{ T\in\mathscr{B}(X) : \text{the identity operator
    on}\ X\ \text{does not factor through}\ T\}
\end{equation} 
for a Banach space~$X$, we can state our main result as follows.

\begin{theorem}\label{uniquemaxideal}
  For each $p\in (1,\infty)$, the sets $\mathscr{M}_{W_p}$ and
  $\mathscr{M}_{W_p^*}$ given by~\eqref{defnMX} are the unique maximal
  ideals of the Banach algebras $\mathscr{B}(W_p)$ and
  $\mathscr{B}(W_p^*)$, respectively.
\end{theorem}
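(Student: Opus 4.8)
The plan is to prove both assertions by the same scheme, so fix $X\in\{W_p, W_p^*\}$ and write $I=I_X$ for the identity operator. First I would dispose of the formal part. Directly from~\eqref{defnMX} one checks that $\mathscr{M}_X$ is closed under left and right multiplication by $\mathscr{B}(X)$ and under scalar multiplication: if $I=B(AT)C$ then $I=(BA)T C$ factors through $T$, and symmetrically on the right, so the contrapositives give the two module properties, while the scalar case is trivial. Moreover $\mathscr{M}_X$ is proper, since $I\notin\mathscr{M}_X$ ($I$ factors through itself), and every $T\notin\mathscr{M}_X$ generates $\mathscr{B}(X)$ as a two-sided ideal, because $I=BTA$ for suitable $A,B$. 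Consequently, once we know that $\mathscr{M}_X$ is an \emph{ideal}, every proper ideal must avoid the generators and hence be contained in $\mathscr{M}_X$; this simultaneously shows that $\mathscr{M}_X$ is maximal and that it is the only maximal ideal. The entire difficulty is therefore concentrated in the single remaining point: that $\mathscr{M}_X$ is closed under addition.

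To address this I would first reduce to a cleaner statement. Suppose we can prove the implication
\begin{equation}\label{keyimpl}
  U+V=I\ \Longrightarrow\ I\ \text{factors through}\ U\ \text{or through}\ V,\qquad U,V\in\mathscr{B}(X).
\end{equation}
Then $\mathscr{M}_X$ is closed under addition: if $I=B(S+T)A$, put $U=BSA$ and $V=BTA$, so that $U+V=I$; by~\eqref{keyimpl}, $I$ factors through one of them, say $I=C U D=(CB)S(AD)$, whence $I$ factors through $S$. Taking contrapositives yields $S,T\in\mathscr{M}_X\Rightarrow S+T\in\mathscr{M}_X$. Thus everything comes down to establishing~\eqref{keyimpl}.

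The plan for~\eqref{keyimpl} exploits the block structure $X=\bigl(\bigoplus_n E_n\bigr)_{\ell_p}$ (resp. $\ell_q$), where $E_n=\ell_\infty^n$ (resp. $\ell_1^n$). Let $P_n$ be the canonical projection onto the $n$-th summand. Compressing $U+V=I$ gives $P_nUP_n+P_nVP_n=P_n$, so the operators $a_n,b_n\in\mathscr{B}(E_n)$ obtained by restricting these compressions satisfy $a_n+b_n=I_{E_n}$ for every $n$. The crux is a \emph{finite-dimensional selection lemma}: there is a constant $c>0$ such that any splitting $I_{E_n}=a_n+b_n$ admits a coordinate subspace $F\subseteq E_n$ with $\dim F\geq c\,n$ on which one of $a_n,b_n$ is bounded below by $c$ and has well-complemented image, using that coordinate subspaces of $\ell_\infty^n$ and of $\ell_1^n$ are $1$-complemented. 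Letting $n\to\infty$ and passing to a subsequence of blocks on which the \emph{same} summand (say $U$) always wins, a gliding-hump argument builds a complemented subspace $Y\subseteq X$ of the form $\bigl(\bigoplus_j E_{k_j}\bigr)_{\ell_p}$ with $k_j\to\infty$, on which $U$ is bounded below with complemented image, after a small perturbation. Since $\bigl(\bigoplus_j \ell_\infty^{k_j}\bigr)_{\ell_p}\cong W_p$ and $\bigl(\bigoplus_j\ell_1^{k_j}\bigr)_{\ell_q}\cong W_p^*$ whenever $k_j\to\infty$, the space $Y$ is isomorphic to $X$; and an operator that is bounded below on a complemented copy of $X$ with complemented image factors $I$ through itself via the standard formula $B=(U|_Y)^{-1}R$, $A=$ the embedding of $X\cong Y$, where $R$ is the projection onto $U(Y)$. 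This proves that $I$ factors through $U$, giving~\eqref{keyimpl}.

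I expect the genuine obstacle to be the finite-dimensional selection lemma together with the complementation of $U(Y)$: passing from a splitting of $I_{E_n}$ to a large coordinate subspace on which one summand is uniformly \emph{bounded below} (not merely has large diagonal) is a restricted-invertibility-type phenomenon, and controlling the images well enough to obtain a genuinely complemented copy of $X$ is where the $\ell_\infty^n$ (resp. $\ell_1^n$) lattice geometry must be used decisively and uniformly in~$n$. The dual assertion is not a formal consequence of the primal one, because $W_p$ is non-reflexive and not every operator on $W_p^*$ is an adjoint; instead I would run the entire argument above verbatim with $\ell_1^n$ in place of $\ell_\infty^n$, proving the corresponding selection lemma for splittings of $\ell_1^n$ and invoking $\bigl(\bigoplus_j\ell_1^{k_j}\bigr)_{\ell_q}\cong W_p^*$.
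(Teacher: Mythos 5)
Your formal reductions are sound: the Dosev--Johnson observation that everything hinges on additive closedness of $\mathscr{M}_X$, and your further reduction to the implication ``$U+V=I_X$ implies $I_X$ factors through $U$ or through $V$'', are both correct and match the first step of the paper. But the argument then stops exactly where the theorem lives: your ``finite-dimensional selection lemma'' is stated, not proved, and you say yourself that you expect it to be the genuine obstacle, so as submitted this is a programme rather than a proof. (For the record, the $\ell_\infty^n$ case of the lemma is true and not terribly hard: since the diagonal entries satisfy $a_{ii}+b_{ii}=1$, one of the two compressions has $|a_{ii}|\ge 1/2$ on at least $n/2$ coordinates, and a random-subset-plus-deletion argument applied to the rows, which have $\ell_1$-norm at most $\max\{\|U\|,\|V\|\}$, extracts a diagonally dominant principal submatrix of proportional size; diagonal dominance then also furnishes a left inverse of uniformly bounded norm, hence the complementation you need. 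Note also that proportionality is more than required: any $\dim F_n\to\infty$ suffices, because $\bigl(\bigoplus_j\ell_\infty^{k_j}\bigr)_{\ell_p}\cong W_p$ whenever $k_j\to\infty$.) The paper takes a genuinely different route that avoids all finite-dimensional combinatorics: it shows that the operators failing to fix the family $\{\ell_\infty^n : n\in\N\}$ uniformly form a closed operator ideal (Theorem~\ref{linftynsingopsIdeal}, proved by passing to ultrapowers, where the question becomes $c_0$-strict singularity and is settled by the minimality of $c_0$ together with Kato's lemma), and that this ideal coincides with $\mathscr{M}_{W_p}$ (Theorem~\ref{mainthm}, proved via a finite rows-and-columns perturbation, Lemma~\ref{finiterowsandcols}, and a block induction in which the left inverses come for free from the $1$-injectivity of $\ell_\infty^n$, Lemma~\ref{easyfact2}). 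Your plan is closer in spirit to the combinatorial approaches of Leung and of Laustsen--Odell--Schlumprecht--Zs\'{a}k than to this paper's ultraproduct argument, but its central claims are conjectured rather than established.

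The second gap is a factual error that derails your treatment of $W_p^*$. You assert that $W_p$ is non-reflexive and that the dual statement is therefore not a formal consequence of the primal one; in fact $W_p$ \emph{is} reflexive for $p\in(1,\infty)$, being an $\ell_p$-sum of finite-dimensional spaces (you may be thinking of Leung's case $p=1$, where $W_1$ is indeed non-reflexive). The paper exploits precisely this: $T\mapsto T^*$ is an isometric anti-isomorphism of $\mathscr{B}(W_p)$ onto $\mathscr{B}(W_p^*)$, it induces an order isomorphism of the ideal lattices, and $\{T^* : T\in\mathscr{M}_{W_p}\}=\mathscr{M}_{W_p^*}$, so the $W_p^*$ half of the theorem costs nothing once the $W_p$ half is known. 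Your alternative plan --- rerunning the entire construction verbatim with $\ell_1^n$ blocks --- is thus unnecessary, and moreover, as you set it up, it would face a real difficulty at the complementation step: unlike $\ell_\infty^m$, the space $\ell_1^m$ is not injective, so an operator from $\ell_1^m$ which is bounded below need not admit a left inverse of controlled norm, and the $1$-complementation of \emph{coordinate} subspaces of $\ell_1^n$ says nothing about the image $U(F)$, which is not a coordinate subspace. So the dual branch, as proposed, is both misdiagnosed and incomplete, whereas the primal branch reduces the theorem to an unproven restricted-invertibility statement.
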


This theorem adds the spaces~$W_p$ and~$W_p^*$ for $p\in(1,\infty)$ to
the already substantial list, summarized in \cite[p.~4832]{KLjfa}, of
Banach spaces~$X$ for which the set~$\mathscr{M}_X$ is known to be the
unique maximal ideal of~$\mathscr{B}(X)$.

In general, Dosev and Johnson~\cite[p.~166]{dj} observed that, for a
Banach space~$X$, the set~$\mathscr{M}_X$ given by~\eqref{defnMX} is
an ideal of~$\mathscr{B}(X)$ if (and only if) $\mathscr{M}_X$ is
closed under addition, and in the positive case, $\mathscr{M}_X$ is
automatically the unique maximal ideal of~$\mathscr{B}(X)$. Thus, to
prove Theorem~\ref{uniquemaxideal}, it suffices to show that the sets
$\mathscr{M}_{W_p}$ and $\mathscr{M}_{W_p^*}$ are closed under
addi\-tion.

Our approach is completely different from Leung's. Let us here
describe the two most important results that we establish en route to
Theorem~\ref{uniquemaxideal}, as they outline our strategy, and they
may be of some independent interest. First, in
Section~\ref{Sectionnonfixops}, we introduce a new operator ideal in
the following way.  For $p\in[1,\infty]$ and Banach spaces~$X$
and~$Y$, define
\begin{equation}\label{linftynsingops}
  \mathscr{S}_{\{\ell_p^n\: :\: n\in\N\}}(X,Y) =
  \bigl\{T\in\mathscr{B}(X,Y) : T\ \text{does not fix the family}\
  \{\ell_p^n : n\in\N\}\ \text{uniformly}\bigr\}. 
\end{equation}
(Details of this terminology can be found in
Definitions~\ref{defnfixing} and~\ref{defnfixingfamily}.)

\begin{theorem}\label{linftynsingopsIdeal} 
  The class~$\mathscr{S}_{\{\ell_p^n\: :\: n\in\N\}}$ given
  by~\eqref{linftynsingops} is a closed operator ideal in the sense of
  Pietsch for each $p\in[1,\infty]$.
\end{theorem}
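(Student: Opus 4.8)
The plan is to verify the three defining properties of a closed operator ideal in Pietsch's sense: (1) it contains all finite-rank operators and is contained in the bounded operators; (2) it is closed under addition; and (3) it satisfies the two-sided ideal property, namely that composing with arbitrary bounded operators on either side keeps us inside the class. Closedness in the operator norm should be handled alongside. Since I have not yet seen Definitions~\ref{defnfixing} and~\ref{defnfixingfamily}, I work from the natural reading: an operator $T$ \emph{fixes the family $\{\ell_p^n : n\in\N\}$ uniformly} if there is a constant $C\geq 1$ such that for every $n\in\N$ there is a subspace $E_n\subseteq X$ isomorphic to $\ell_p^n$, via an isomorphism whose norm and whose inverse's norm are bounded by $C$ uniformly in $n$, on which $T$ acts as a $C$-isomorphism (again uniformly in $n$). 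Membership in $\mathscr{S}_{\{\ell_p^n : n\in\N\}}$ is the negation: no such uniform constant exists.

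\medskip

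First I would dispose of the easy properties. The ideal property is the cleanest: if $T$ does not fix the family uniformly, and $A,B$ are bounded operators of appropriate domains and codomains, I claim $ATB$ does not fix it uniformly either. The point is a quantitative composition estimate: if $ATB$ were to fix copies of $\ell_p^n$ with a single constant $C$ for all $n$, then by pushing the witnessing subspaces through $B$ and pulling back through $A$ (using the boundedness of $A$ and $B$ to control distortions), I would produce subspaces on which $T$ itself acts as a uniform isomorphism with a constant depending only on $C$, $\|A\|$, and $\|B\|$ — contradicting $T\in\mathscr{S}$. The finite-rank containment is immediate: a finite-rank operator cannot fix copies of $\ell_p^n$ for arbitrarily large $n$ with any uniform lower bound, since the dimension of its range is finite while $\dim\ell_p^n=n\to\infty$. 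The norm-closedness follows because the uniform-fixing constant behaves continuously under norm perturbations: if $T_k\to T$ and each $T_k$ fails to fix the family uniformly, a limiting/perturbation argument shows $T$ fails as well (equivalently, the complement — the set of uniformly fixing operators — is open, as a small perturbation of a $C$-isomorphism on each $E_n$ is a $2C$-isomorphism).

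\medskip

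The main obstacle, as flagged in the introduction via the Dosev--Johnson criterion, is \textbf{closure under addition}. Here I cannot argue pointwise on a fixed family of subspaces, because $S$ and $T$ might fail to fix the family for incompatible reasons, yet $S+T$ could conceivably succeed. The strategy I would adopt is a contrapositive, dimension-splitting argument. Suppose $S+T$ fixes $\{\ell_p^n : n\in\N\}$ uniformly, with witnessing subspaces $E_n\subseteq X$ of dimension $n$. On each $E_n$, consider the operators $S|_{E_n}$ and $T|_{E_n}$. The essential idea is that on a copy of $\ell_p^n$, the unit-vector basis is ``spread out'', and one can pass to a large sub-copy (of dimension $m_n$ growing to infinity) on which \emph{one} of $S$ or $T$ dominates — that is, acts as a uniform isomorphism — by a pigeonhole/averaging argument over the two summands. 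Concretely, I would seek, for each $n$, a further subspace $F_n\subseteq E_n$ with $\dim F_n\to\infty$ on which either $S$ or $T$ is bounded below uniformly; since the index set $\{S,T\}$ is finite, one of the two choices recurs for infinitely many $n$, and along that subsequence it witnesses uniform fixing of the family $\{\ell_p^m : m\in\N\}$ by that single operator. This shows at least one of $S,T$ lies outside $\mathscr{S}$, giving the contrapositive.

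\medskip

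The technical heart of this addition step — and the place I expect genuine difficulty — is the passage to a large subspace on which one summand dominates. The clean mechanism is a \emph{restricted-invertibility} or \emph{gliding-hump} selection exploiting the specific structure of $\ell_p^n$: on the copy of $\ell_p^n$ one has a good basis, and writing the coordinates of $S$ and $T$ against this basis, one decomposes the identity $(S+T)|_{E_n}$ and uses that a uniform lower bound for $S+T$ forces a uniform lower bound for $S$ or for $T$ on a coordinate subspace of proportional (or at least unboundedly growing) dimension. Making the extracted subspace genuinely a \emph{uniform} copy of $\ell_p^{m_n}$ — i.e. controlling both the embedding constant and the lower bound simultaneously, uniformly in $n$ — is where the $\ell_p$-geometry (and presumably a lemma established earlier in Section~\ref{Sectionnonfixops}) must be invoked. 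I would isolate this as the key lemma and reduce the closure-under-addition claim to it.
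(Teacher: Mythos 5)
Your routine verifications (closure under compositions, containment of finite-rank operators, norm-closedness) are correct and coincide with the paper's treatment of those points. The genuine gap is exactly where you flag it: closure under addition. You reduce it to an unproved ``key lemma'' --- that if $S+T$ is bounded below on a uniform copy of $\ell_p^n$, then one of $S$, $T$ is bounded below, with a uniform constant, on a sub-copy of $\ell_p^{m_n}$ with $m_n\to\infty$ --- and the mechanisms you gesture at (pigeonhole/averaging over the two summands, gliding hump, restricted invertibility) do not deliver it. No pointwise pigeonhole can work because lower bounds do not split: $\|(S+T)x\|\ge c\|x\|$ gives no lower bound on $\|Sx\|$ or on $\|Tx\|$ even for a single vector, let alone on a common subspace of large dimension. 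The infinite-dimensional analogue of your lemma is Kato's argument (Lemma~\ref{katolemma}), which constructs an almost-null subspace by an iteration that has no direct finite-dimensional quantitative counterpart: in a finite-dimensional space every injective operator is bounded below, so your statement has content only \emph{uniformly in} $n$, and some compactness device is needed to exploit that uniformity. Moreover, even granting a subspace $F_n\subseteq E_n$ of growing dimension on which one summand dominates, you would still need $F_n$ to contain uniform copies of $\ell_p^{m}$ with $m\to\infty$ --- a quantitative finite-dimensional version of Pe\l{}czy\'{n}ski's minimality theorem; for $p=\infty$, for instance, a large-dimensional subspace of $\ell_\infty^n$ may be nearly Euclidean (Dvoretzky), and extracting $\ell_\infty^m$-copies from it costs an exponential loss of dimension. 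None of this is supplied by your sketch.

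The missing idea --- and the paper's actual route --- is to obtain the needed compactness via ultrapowers. Lemma~\ref{ultrapowerfixinglemma} and Corollary~\ref{fixinglemma} show that $T$ fixes the family $\{\ell_p^n : n\in\N\}$ uniformly if and only if an ultrapower $T_{\EuScript{U}}$ fixes a copy of $E_p$ (where $E_p=\ell_p$ for $p<\infty$ and $E_p=c_0$ for $p=\infty$): one direction uses that an ultraproduct of the spaces $\ell_p^n$ is an infinite-dimensional $L_p(\mu)$- or $C(K)$-space and hence contains $E_p$; the converse is a quantitative finite-representability argument with explicit control of constants. This converts the uniform-in-$n$ problem into a single infinite-dimensional one, where closure under addition of $\mathscr{S}_{E_p}$ is classical: it follows from Kato's lemma together with the minimality of $\ell_p$ and $c_0$ (Proposition~\ref{SEoperatorideal}). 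Since $(S+T)_{\EuScript{U}} = S_{\EuScript{U}} + T_{\EuScript{U}}$, the addition step is then immediate, and Corollary~\ref{fixinglemma} transfers the conclusion back. In short, your outline correctly isolates the hard step but leaves it unproved, and proving it directly in finite dimensions would be substantially harder than the theorem itself; the ultrapower transfer is the essential ingredient your proposal lacks.
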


Second, in Section~\ref{section3}, we show that the
ideal~$\mathscr{S}_{\{\ell_\infty^n\: :\: n\in\N\}}(W_p)$ is equal to
the set~$\mathscr{M}_{W_p}$.

\begin{theorem}\label{mainthm} Let $p\in(1,\infty)$. An operator
  $T\in\mathscr{B}(W_p)$ fixes the family $\{\ell_\infty^n : n\in\N\}$
  uniformly if and only if the identity operator on~$W_p$ factors
  through~$T$.
\end{theorem}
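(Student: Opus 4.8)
The statement is an equivalence, and I plan to establish the two implications separately, with the substance lying entirely in the ``only if'' direction. The ``if'' implication is routine: suppose $I_{W_p}=VTU$ for some $U,V\in\mathscr{B}(W_p)$, and for $n\in\N$ let $G_n$ be the $n$th coordinate summand of $W_p$, an isometric copy of $\ell_\infty^n$. Setting $F_n:=U[G_n]$, I would read off from $VTU=I_{W_p}$ that for every $x\in G_n$ one has $\|x\|=\|VTUx\|\le\|V\|\,\|TUx\|$, so that $\|TUx\|\ge\|x\|/\|V\|$; together with the trivial upper estimates this shows that $U|_{G_n}$ is an isomorphism onto $F_n$ and that $T|_{F_n}$ is bounded below, both with constants depending only on $\|U\|$, $\|V\|$ and $\|T\|$, hence independent of $n$. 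Thus the subspaces $(F_n)_{n\in\N}$ witness that $T$ fixes the family $\{\ell_\infty^n : n\in\N\}$ uniformly.

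For the ``only if'' implication I would exploit that $W_p$ carries the finite-dimensional decomposition $(\ell_\infty^n)_{n\in\N}$, whose partial-sum projections have norm one and whose tail projections tend to~$0$ strongly (here reflexivity of $W_p$ for $p\in(1,\infty)$ is convenient). Starting from subspaces $F_n$ with $d(F_n,\ell_\infty^n)\le C$ on which $T$ is a $C$-isomorphism, which is precisely what uniform fixing provides, the plan is to manufacture by induction on~$k$ subspaces $\tilde F_k\cong\ell_\infty^k$, uniformly in~$k$, such that $T|_{\tilde F_k}$ is a uniform isomorphism, the $\tilde F_k$ are supported on pairwise disjoint finite intervals $J_k$ of the decomposition, and their images $T[\tilde F_k]$ are likewise supported on pairwise disjoint finite intervals $K_k$. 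Granting this, disjointness of the blocks together with the $\ell_p$-structure yields at one stroke that $\clspa\bigl(\bigcup_k\tilde F_k\bigr)\cong\bigl(\bigoplus_k\ell_\infty^k\bigr)_{\ell_p}=W_p$ and, since $T$ is a uniform isomorphism on each block while the images occupy disjoint blocks, that $T$ restricts to an isomorphism of this copy of $W_p$ onto $Y:=\clspa\bigl(\bigcup_k T[\tilde F_k]\bigr)$.

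It then remains to convert this into a factorization $I_{W_p}=VTU$. Choosing isomorphisms $u_k\colon\ell_\infty^k\to\tilde F_k$ with $\sup_k\max(\|u_k\|,\|u_k^{-1}\|)<\infty$, I would let $U\in\mathscr{B}(W_p)$ act as $u_k$ on the $k$th summand; disjointness of the $J_k$ makes $U$ a bounded isomorphism onto $\clspa\bigl(\bigcup_k\tilde F_k\bigr)$, so that $TU$ is an isomorphism of $W_p$ onto~$Y$. The crucial point is that $Y$ is complemented: since $T[\tilde F_k]$ is uniformly isomorphic to~$\ell_\infty^k$, which is a $1$-injective Banach space, there is a projection of the block $\bigoplus_{j\in K_k}\ell_\infty^j$ onto $T[\tilde F_k]$ of norm bounded independently of~$k$; assembling these over the disjoint blocks $K_k$ and using the $\ell_p$-norm produces a bounded projection $P$ of $W_p$ onto~$Y$. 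Setting $V:=(TU)^{-1}P$, where $(TU)^{-1}\colon Y\to W_p$ is the inverse isomorphism, gives $VTU=(TU)^{-1}P\,TU=I_{W_p}$, because $P$ is the identity on~$Y$ and $TU$ maps into~$Y$.

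The main obstacle is the inductive construction of the doubly-disjoint blocks. The enabling observation is that at each stage only finitely many coordinates have been used on the domain side and on the image side; writing $P'$ for the associated finite-rank projection on each side, the subspace of $F_N$ annihilated by $P'$ and by $P'T$ has codimension bounded by a fixed number~$R$, hence contains almost all of a high-dimensional copy of~$\ell_\infty^N$. Inside $\ell_\infty^N$ one finds an isometric $\ell_\infty^k$ whenever $N\ge k(R+1)$, by splitting $\{1,\dots,N\}$ into $k$ blocks each of size exceeding~$R$ and choosing in each block a norm-one vector killed by the finitely many relevant functionals; the resulting disjointly supported vectors span an isometric $\ell_\infty^k$ lying in the prescribed tails on both sides. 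A gliding-hump perturbation then replaces this copy by one supported exactly on a finite interval, at the cost of an arbitrarily small constant, while simultaneously arranging the image to be block-supported. Keeping track of the accumulated distortions so that the uniform-isomorphism constants do not blow up across the induction is where the care is needed, and I expect that isolating this extraction-and-perturbation step as a self-contained lemma, valid for every $p\in(1,\infty)$ since it uses only the finite-dimensional decomposition and the injectivity of the spaces~$\ell_\infty^n$, will be the technical heart of the argument.
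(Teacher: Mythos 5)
Your proposal is correct in outline, but it reaches the hard implication by a genuinely different route from the paper. The paper never performs your combinatorial extraction inside $\ell_\infty^N$; instead it passes to an ultrapower: by Corollary~\ref{fixinglemma}, uniform fixing of $\{\ell_\infty^n : n\in\N\}$ is equivalent to the ultrapower $T_{\EuScript{U}}$ fixing a single copy of~$c_0$; the finite-rank perturbations that arise at each induction step (cutting away the coordinates already used, on both the domain and the image side) are then absorbed \emph{upstairs} by Lemma~\ref{ssperturbation}, a strictly-singular-perturbation result for $c_0$-fixing operators resting on Kato's lemma and almost isometric copies of~$c_0$ in every infinite-dimensional subspace, and the copy of $\ell_\infty^{N+1}$ is brought back down with controlled constants via Lemma~\ref{ultrapowerfixinglemma}. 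Moreover, in place of your per-stage gliding hump, the paper makes a single global perturbation at the outset, replacing $T$ by an operator $T''$ with finite rows and columns (Lemma~\ref{finiterowsandcols}), so the induction yields an exactly commuting diagram and $ST''R=I_{W_p}$ on the nose, with one von Neumann series estimate at the end. Your substitute for the ultrapower step --- killing the finitely many functionals coming from $P'$ and $P'T$ inside a huge $\ell_\infty^N$, where disjointly supported norm-one vectors in the common kernel span an \emph{isometric} $\ell_\infty^k$ --- is sound, and it is precisely what stops your constants from blowing up across the induction, since the extraction inside $\ell_\infty^N$ costs nothing and transporting it through the witnessing embedding costs only the fixed constant~$C$. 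What each approach buys: yours is elementary and ultrafilter-free, but it is special to the lattice structure of the spaces $\ell_\infty^n$, whereas the paper's machinery simultaneously yields Theorem~\ref{linftynsingopsIdeal} for every $p\in[1,\infty]$, where no such disjointness combinatorics are available.

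Two points need repair when you write this up. First, since $T[\tilde F_k]$ is only \emph{approximately} supported on the block $K_k$, your final identity $VTU=I_{W_p}$ cannot hold exactly; you should make the stage-$k$ truncation errors decay fast enough (geometrically, say, using H\"{o}lder's inequality across the disjoint blocks to control the off-block part of $T$ on the span) so that $\|VTU-I_{W_p}\|<1$, and then factor $I_{W_p}=(VTU)^{-1}VTU$ through~$T$ --- exactly the paper's closing move for $STR$. Second, your codimension bound~$R$ is not a fixed number: it grows with the stage, since it counts all coordinates used so far on both sides; this is harmless, but only because uniform fixing of the whole family lets you choose $N\geq k(R+1)$ as large as each stage demands, and that dependence should be made explicit.
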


Ultraproducts play a key role in the proofs of both of these theorems.


\section{Operators fixing certain Banach spaces and the proof of
  Theorem~\ref{linftynsingopsIdeal}}\label{Sectionnonfixops}
\noindent
Throughout this paper, all Banach spaces are supposed to be over the
same scalar field~$\K$, either the real or the complex numbers. By an
\emph{ideal}, we understand a two-sided, algebraic ideal. The term
\emph{operator} means a bounded, linear mapping between Banach
spaces. Given two Banach spaces~$X$ and~$Y$, we
write~$\mathscr{B}(X,Y)$ for the Banach space of all operators
from~$X$ to~$Y$, and we set $\mathscr{B}(X) = \mathscr{B}(X,X)$. 

An operator $T\colon X\to Y$ is \emph{bounded below} by a
constant~$c>0$ if $\|Tx\|\ge c\|x\|$ for each $x\in X$. This is
equivalent to saying that~$T$ is an isomorphism onto its range~$T[X]$,
which is closed, and the inverse operator from~$T[X]$ onto~$X$ has
norm at most~$c^{-1}$.  The class of operators which are bounded below
is open in the norm topology; more precisely, we have the following
estimate, which is an immediate consequence of the sub\-additivity of
the norm.

\begin{lemma}\label{easyfact1}
  Let~$X$ and~$Y$ be Banach spaces, let $c>\epsilon\ge 0$, and let
  $S,T\colon X\to Y$ be operators such that $\| S-T\|\le\epsilon$ and
  $T$ is bounded below by~$c$. Then~$S$ is bounded below
  by~$c-\epsilon$.
\end{lemma}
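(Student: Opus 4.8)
The plan is to verify the defining inequality of ``bounded below'' directly, estimating $\|Sx\|$ from below for an arbitrary vector. Since the hypothesis $c>\epsilon\ge 0$ guarantees that $c-\epsilon>0$, the conclusion ``$S$ is bounded below by $c-\epsilon$'' is meaningful, and it suffices to show that $\|Sx\|\ge(c-\epsilon)\|x\|$ for every $x\in X$.

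First I would fix $x\in X$ and write $Sx=Tx-(T-S)x$. Applying the triangle inequality to this decomposition yields $\|Tx\|\le\|Sx\|+\|(T-S)x\|$, whence
\begin{equation*}
  \|Sx\|\ge\|Tx\|-\|(T-S)x\|.
\end{equation*}
Next I would bound the two terms on the right-hand side using the hypotheses: since $T$ is bounded below by~$c$, we have $\|Tx\|\ge c\|x\|$; and since $\|S-T\|\le\epsilon$, the operator-norm inequality gives $\|(T-S)x\|\le\|T-S\|\,\|x\|\le\epsilon\|x\|$. Substituting these two estimates into the displayed inequality gives $\|Sx\|\ge c\|x\|-\epsilon\|x\|=(c-\epsilon)\|x\|$, as required.

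There is essentially no obstacle here: the entire argument is a single application of the reverse triangle inequality combined with the definition of the operator norm, exactly as the remark preceding the statement anticipates when it describes the estimate as ``an immediate consequence of the sub\-additivity of the norm.'' The only point worth a moment's attention is the bookkeeping role of the condition $c>\epsilon$, which is needed not for the inequality itself but to ensure that the resulting constant $c-\epsilon$ is strictly positive, so that $S$ genuinely qualifies as bounded below.
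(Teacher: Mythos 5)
Your proof is correct and is precisely the argument the paper has in mind: the paper gives no written proof, noting only that the estimate is ``an immediate consequence of the subadditivity of the norm,'' which is exactly your reverse-triangle-inequality computation $\|Sx\|\ge\|Tx\|-\|(T-S)x\|\ge(c-\epsilon)\|x\|$. Your closing remark about the role of the hypothesis $c>\epsilon$ (ensuring $c-\epsilon>0$) is also apt.
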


\begin{definition}\label{defnfixing}
  Let $E$, $X$ and~$Y$ be Banach spaces, let $T\colon X\to Y$ be an
  operator, and let $C\ge 1$. We say that~$T$ \emph{$C$-fixes a copy
    of~$E$} if there is an operator $S\colon E\to X$ of norm at
  most~$C$ such that the composite operator~$TS$ is bounded below
  by~$1/C$. In the case where the value of the constant~$C$ is not
  important, we shall simply say that~$T$ \emph{fixes a copy of~$E$}.

  An operator which does not fix a copy of~$E$ is called
  \emph{$E$-strictly singular;} the set of $E$-strictly singular
  operators from~$X$ to~$Y$ is denoted by~$\mathscr{S}_E(X,Y)$.
\end{definition}

A straightforward application of Lemma~\ref{easyfact1} leads to the
following conclusion.
\begin{corollary}\label{easyfact1cor}
  Let $E$, $X$ and~$Y$ be Banach spaces, let $C'\ge C\ge 1$, and let
  $S,T\colon X\to Y$ be operators such that $T$ $C$-fixes a copy
  of~$E$ and $\| S-T\|\le(C'-C)/C^2C'$. Then~$S$ $C'$-fixes a copy
  of~$E$.
\end{corollary}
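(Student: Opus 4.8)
The plan is to reuse the very operator that already witnesses the $C$-fixing property of~$T$, so the whole argument reduces to a single perturbation estimate. By Definition~\ref{defnfixing}, the hypothesis that $T$ $C$-fixes a copy of~$E$ supplies an operator $R\colon E\to X$ with $\|R\|\le C$ such that the composite $TR$ is bounded below by~$1/C$. Since $C\le C'$, this same~$R$ already meets the norm constraint $\|R\|\le C'$ demanded for $S$ to $C'$-fix a copy of~$E$; the only thing left to check is that $SR$ is bounded below by~$1/C'$. In other words, I would take $R$ itself as the candidate witness for~$S$ and verify the lower bound.

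The first step is to control how far the composite moves under the perturbation. I would estimate $\|SR-TR\|=\|(S-T)R\|\le\|S-T\|\,\|R\|\le\frac{C'-C}{C^2C'}\cdot C=\frac{C'-C}{CC'}$, where the precise value of the perturbation bound in the hypothesis is exactly what is needed. With this in hand, I would apply Lemma~\ref{easyfact1} to the pair of operators $SR,TR\colon E\to Y$, choosing $c=1/C$ and $\epsilon=(C'-C)/(CC')$. The two hypotheses of the lemma hold: $TR$ is bounded below by~$c$ by construction, and the displayed estimate gives $\|SR-TR\|\le\epsilon$.

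The point at which everything fits together is the arithmetic identity $c-\epsilon=\frac{1}{C}-\frac{C'-C}{CC'}=\frac{1}{C'}$, which simultaneously confirms the admissibility condition $c>\epsilon\ge0$ (since $\epsilon\ge0$ because $C'\ge C$, and $c-\epsilon=1/C'>0$) and shows that the conclusion of Lemma~\ref{easyfact1} reads precisely that $SR$ is bounded below by~$1/C'$. Together with $\|R\|\le C\le C'$, this establishes that $S$ $C'$-fixes a copy of~$E$. There is no real obstacle here beyond bookkeeping; the only delicate point is matching the constant $(C'-C)/(C^2C')$ in the hypothesis against the quantities fed into Lemma~\ref{easyfact1}, and this constant is engineered so that the residual lower bound collapses cleanly to~$1/C'$.
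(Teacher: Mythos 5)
Your proof is correct and is precisely the ``straightforward application of Lemma~\ref{easyfact1}'' that the paper has in mind (the paper omits the details): reuse the witness $R$ for $T$, estimate $\|(S-T)R\|\le(C'-C)/(CC')$, and apply the lemma with $c=1/C$ to obtain the lower bound $1/C-(C'-C)/(CC')=1/C'$ for $SR$. The arithmetic and the verification that $c>\epsilon\ge 0$ (including the degenerate case $C'=C$) are all in order.
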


It follows in particular that the set~$\mathscr{S}_E(X,Y)$ is
norm-closed in~$\mathscr{B}(X,Y)$ for any Banach spaces~$E$, $X$
and~$Y$. Moreover, the class~$\mathscr{S}_E$ is clearly closed under
arbitrary compositions, in the sense that $STR\in\mathscr{S}_E(W,Z)$
whenever $R\in\mathscr{B}(W,X)$, $T\in\mathscr{S}_E(X,Y)$ and
\mbox{$S\in\mathscr{B}(Y,Z)$} (and~$W$, $X$, $Y$ and~$Z$ are Banach
spaces).  Thus~$\mathscr{S}_E$ is a closed operator ideal in the sense
of Pietsch if (and only if) it is closed under addition. We shall now
show that this is the case provided that the Banach space~$E$ is
\emph{minimal}, in the sense~$E$ is infinite-dimensional and each of
its closed, infinite-dimensional subspaces contains a further subspace
which is isomorphic to~$E$. Examples of minimal Banach spaces include
the classical sequence spaces~$c_0$ and~$\ell_p$ for $1\le p<\infty$
(Pe\l{}czy\'{n}ski~\cite{pelc}), the dual of Tsirelson's space~$T$
(Casazza, Johnson and Tzafriri~\cite{cjt}; note that we follow the
convention, originating from~\cite{fj}, that the term `Tsirelson's
space~$T$' refers to the dual of the space originally constructed by
Tsirelson) and Schlumprecht's space~$S$ (Schlumprecht~\cite{AS}). On
the other hand, we note in passing that Tsirelson's space~$T$ is not
itself minimal~\cite{co}.

We shall require the following lemma (see
\cite[Proposition~2.c.4]{lt1}, where it is attributed to
Kato~\cite{kato}), whose statement involves the following standard piece
of terminology: an operator is \emph{approximable} if it belongs to
the norm-closure of the set of finite-rank operators.

\begin{lemma}\label{katolemma} 
  Let $X$ and $Y$ be infinite-dimensional Banach spaces, and let
  $T\colon X \to Y$ be an operator which is not bounded below on any
  finite-codimensional subspace of~$X$.  Then, for each $\epsilon>0$,
  $X$ contains a closed, infinite-dimensional subspace~$W$ such that
  the restriction of the operator~$T$ to the subspace~$W$ is
  approximable and has norm at most $\epsilon$.
\end{lemma}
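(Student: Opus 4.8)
The plan is to produce a normalised basic sequence $(x_n)_{n\in\N}$ in~$X$ along which~$T$ decays rapidly, and then to take $W$ to be its closed linear span. Two ingredients are needed: the hypothesis on~$T$, and a standard perturbation device for building basic sequences, which I recall in the following form (Mazur). Given a finite-dimensional subspace $F\subseteq X$ and $\eta>0$, there is a closed, finite-codimensional subspace $Z\subseteq X$ such that $\|f\|\leq(1+\eta)\|f+z\|$ for all $f\in F$ and $z\in Z$; this follows by covering the compact unit sphere of~$F$ by finitely many small balls and applying the Hahn--Banach theorem to produce the finitely many functionals that cut out~$Z$.

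First I would fix a sequence $(\eta_n)_{n\in\N}$ of positive reals with $\sum_n\eta_n<\infty$, so that $K=\prod_n(1+\eta_n)<\infty$, together with a summable sequence $(\epsilon_n)_{n\in\N}$ of positive reals whose overall size will be pinned down at the end. I then construct $(x_n)$ by induction. Having selected unit vectors $x_1,\dots,x_{n-1}$, I apply Mazur's result to $F=\spa\{x_1,\dots,x_{n-1}\}$ and $\eta_n$ to obtain a finite-codimensional subspace~$Z_{n-1}$; since~$T$ is not bounded below on~$Z_{n-1}$ by hypothesis, there is a unit vector $x_n\in Z_{n-1}$ with $\|Tx_n\|<\epsilon_n$. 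Taking $f=\sum_{i<n}a_ix_i$ and $z=a_nx_n\in Z_{n-1}$ in the Mazur estimate and iterating shows that $(x_n)$ is basic with basis constant at most~$K$. Consequently the partial-sum projections $P_N$ on $W=\clspa\{x_n:n\in\N\}$ are uniformly bounded by~$K$, and the coordinate estimate $|a_n|\leq 2K\|x\|$ holds for every $x=\sum_n a_nx_n\in W$. As the $x_n$ are linearly independent and infinite in number, $W$ is closed and infinite-dimensional, as required.

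Both conclusions now reduce to coordinatewise estimates. For the norm bound, $x=\sum_n a_nx_n\in W$ gives $\|Tx\|\leq\sum_n|a_n|\,\|Tx_n\|\leq 2K\|x\|\sum_n\epsilon_n$, so choosing $(\epsilon_n)$ at the outset with $\sum_n\epsilon_n\leq\epsilon/2K$ forces $\|T|_W\|\leq\epsilon$. For approximability, each operator $TP_N$ has finite-dimensional range, and $\|T|_W-TP_N\|=\|T(I-P_N)|_W\|\leq 2K\sum_{n>N}\epsilon_n\to0$ as $N\to\infty$, exhibiting $T|_W$ as a norm-limit of finite-rank operators.

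The main obstacle is the order of operations in the inductive step: I must commit to the finite-codimensional subspace~$Z_{n-1}$ first, in order to keep $(x_n)$ basic, and only afterwards invoke the failure of~$T$ to be bounded below \emph{on that particular subspace} to secure $\|Tx_n\|<\epsilon_n$. The accompanying bookkeeping---arranging that the product $\prod_n(1+\eta_n)$ and the tail sums $\sum_{n>N}\epsilon_n$ are controlled, with the $\eta_n$ chosen independently of~$\epsilon$ and the $\epsilon_n$ then scaled against the resulting constant~$K$---is delicate to state cleanly but routine once this quantifier order is fixed.
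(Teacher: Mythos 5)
Your proof is correct and coincides with the standard argument of Kato that the paper invokes without proof via the citation to \cite[Proposition~2.c.4]{lt1}: a Mazur-type construction of a normalized basic sequence $(x_n)$ with $x_n$ chosen in a pre-selected finite-codimensional subspace $Z_{n-1}$ so that $\|Tx_n\|<\epsilon_n$, followed by the coordinate estimate $|a_n|\le 2K\|x\|$ to get both the norm bound and approximability of $T|_W$ as the limit of the finite-rank operators $TP_N|_W$. You correctly identify and resolve the one genuinely delicate point, namely fixing $Z_{n-1}$ before appealing to the hypothesis on that particular subspace, and scaling $(\epsilon_n)$ against $K=\prod_n(1+\eta_n)$ only after the $\eta_n$ are fixed.
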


\begin{proposition}\label{SEoperatorideal}
  Let~$E$ be a minimal Banach space. Then the class~$\mathscr{S}_E$ of
  $E$-strictly singular operators is a closed operator ideal in the
  sense of Pietsch.
\end{proposition}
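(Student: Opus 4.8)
The plan is to use the two facts already recorded in the excerpt: the class $\mathscr{S}_E$ is norm-closed and closed under arbitrary compositions, so by the observation of Dosev and Johnson it remains only to prove that $\mathscr{S}_E(X,Y)$ is closed under addition for all Banach spaces~$X$ and~$Y$. I shall establish the contrapositive statement: if $S,T\in\mathscr{B}(X,Y)$ and the sum $S+T$ fixes a copy of~$E$, then at least one of~$S$ and~$T$ already fixes a copy of~$E$. Since~$E$ is minimal, it is in particular infinite-dimensional; hence if~$Y$ were finite-dimensional, no operator into~$Y$ could fix a copy of~$E$, and the conclusion would be vacuous. I may therefore assume throughout that~$Y$ is infinite-dimensional.

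So suppose that $S+T$ fixes a copy of~$E$, witnessed by an operator $U\colon E\to X$ for which $(S+T)U$ is bounded below, say by a constant $c>0$. The argument then splits according to the behaviour of the operator $SU\colon E\to Y$. In the first case, $SU$ is bounded below on some finite-codimensional subspace~$E_0$ of~$E$. As~$E_0$ is closed and infinite-dimensional, minimality of~$E$ furnishes a subspace of~$E_0$ isomorphic to~$E$, that is, an isomorphism $V\colon E\to E_0$ onto its range; then $SUV$ is bounded below, so~$S$ fixes a copy of~$E$, as required.

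In the remaining case, $SU$ is bounded below on no finite-codimensional subspace of~$E$. Kato's lemma (Lemma~\ref{katolemma}), applied to~$SU$ with $\epsilon<c$, then yields a closed, infinite-dimensional subspace $W\subseteq E$ on which $SU$ has norm at most~$\epsilon$. Restricting to~$W$, the operator $(S+T)U$ is bounded below by~$c$ while $\| SU|_W\|\le\epsilon<c$, so Lemma~\ref{easyfact1} shows that $TU|_W=\bigl((S+T)U-SU\bigr)|_W$ is bounded below by $c-\epsilon$. Invoking minimality of~$E$ once more, $W$ contains a subspace isomorphic to~$E$, and composing the corresponding isomorphism with~$U$ exhibits~$T$ as fixing a copy of~$E$. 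This exhausts the case analysis and completes the proof.

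The step I expect to demand the most care is the passage from an embedded infinite-dimensional subspace back to a genuine copy of~$E$: in each case it is minimality that converts ``bounded below on some infinite-dimensional subspace'' into ``fixes a copy of~$E$'', and one must verify that the relevant composite operators are bounded below with constants meeting the requirements of Definition~\ref{defnfixing}. The dichotomy supplied by Kato's lemma, together with the perturbation estimate of Lemma~\ref{easyfact1}, is the engine ensuring that whichever summand survives on the subspace~$W$ is the one that does the fixing.
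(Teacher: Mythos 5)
Your proposal is correct and takes essentially the same route as the paper: your dichotomy on whether $SU$ is bounded below on some finite-codimensional subspace is just a contrapositive rearrangement of the paper's argument, which assumes $S\in\mathscr{S}_E(X,Y)$, uses minimality to verify the hypothesis of Lemma~\ref{katolemma} for $SR$, and then combines Kato's lemma, the perturbation estimate of Lemma~\ref{easyfact1}, and minimality exactly as you do. Your explicit disposal of the finite-dimensional-$Y$ case is a small point the paper leaves implicit, but otherwise the two proofs use the same key lemmas in the same way.
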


\begin{proof}
  By the remarks above, it suffices to show that, for each pair~$X,Y$
  of Banach spaces, the set~$\mathscr{S}_E(X,Y)$ is closed under
  addition. To verify this, suppose that $S\in\mathscr{S}_E(X,Y)$ and
  $T\in\mathscr{B}(X,Y)$ are operators such that
  $S+T\notin\mathscr{S}_E(X,Y)$; we must show that
  \mbox{$T\notin\mathscr{S}_E(X,Y)$}. Choose an operator $R\colon E\to
  X$ such that $(S+T)R$ is bounded below by $c>0$, say. Since
  $S\in\mathscr{S}_E(X,Y)$ and~$E$ is minimal, the restriction of~$SR$
  to any closed, infinite-dimensional subspace of~$E$ is not bounded
  below. Hence Lemma~\ref{katolemma} implies that~$E$ contains a
  closed, infinite-dimensional subspace~$F$ such that $\|SR|_{F}\|\le
  c/2$. After replacing~$F$ with a suitably chosen subspace, we may in
  addition suppose that~$F$ is isomorphic
  to~$E$. Lemma~\ref{easyfact1} shows that $TR|_{F}$ is bounded below
  by~$c/2$, and so $T\notin\mathscr{S}_E(X,Y)$.
\end{proof}

\begin{remark}
  A more general version of Proposition~\ref{SEoperatorideal} can be
  deduced from a result of Stephani~\cite[Theorem~2.1]{stephani}, as
  Rosenberger observed in his Mathematical Review (MR582517) of
  Stephani's paper.
\end{remark}

The connection between Proposition~\ref{SEoperatorideal} and
Theorem~\ref{linftynsingopsIdeal} goes via ultraproducts. We refer the
reader to \cite[Section~11.1]{ak} or \cite[Chapter~8]{DJT} for basic
facts and notation involving ultra\-products. The following lemma is
essentially a quantitative version of the fact that each ultrapower of
a Banach space~$X$ is finitely representable in~$X$.

\begin{lemma}\label{ultrapowerfixinglemma}
  Let~$E$, $X$ and~$Y$ be Banach spaces, where~$E$ is
  finite-dimensional, let \mbox{$C'>C\ge 1$}, let $T\colon X\to Y$ be
  an operator, and let~$\EuScript{U}$ be a free ultrafilter on~$\N$
  such that the ultrapower $T_{\EuScript{U}}\colon X_{\EuScript{U}}\to
  Y_{\EuScript{U}}$ $C$-fixes a copy of~$E$. Then~$T$ $C'$-fixes a
  copy of~$E$.
\end{lemma}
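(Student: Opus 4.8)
The plan is to realise the operator $S\colon E\to X_{\EuScript{U}}$ witnessing that $T_{\EuScript{U}}$ $C$-fixes a copy of~$E$ by means of a sequence of ``coordinate'' operators $S_n\colon E\to X$, and then to exploit the compactness of the unit sphere of the \emph{finite-dimensional} space~$E$ in order to select, along the ultrafilter~$\EuScript{U}$, a single index~$n$ for which $S_n$ witnesses that~$T$ itself $C'$-fixes~$E$.

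First I would fix a basis $e_1,\ldots,e_k$ of~$E$ and, for each~$j$, a bounded sequence $(x_{j,n})_n$ in~$X$ representing the element $S(e_j)\in X_{\EuScript{U}}$. Defining $S_n\colon E\to X$ by $S_n(e_j)=x_{j,n}$ and extending linearly, one checks that $S(e)$ is represented by the sequence $(S_n(e))_n$ for every $e\in E$, so that $\|S(e)\|=\lim_{\EuScript{U}}\|S_n(e)\|$ and likewise $\|T_{\EuScript{U}}S(e)\|=\lim_{\EuScript{U}}\|TS_n(e)\|$, by the definition of the ultrapower norm and of~$T_{\EuScript{U}}$. The hypotheses that $\|S\|\le C$ and that $T_{\EuScript{U}}S$ is bounded below by~$1/C$ then translate, for each fixed unit vector~$e\in E$, into the two limit relations $\lim_{\EuScript{U}}\|S_n(e)\|\le C$ and $\lim_{\EuScript{U}}\|TS_n(e)\|\ge 1/C$.

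Next I would use the strict inequality $C<C'$ to fix $\epsilon>0$ and $\delta\in(0,1)$ so small that $(C+\epsilon)/(1-\delta)\le C'$ and $\epsilon+\|T\|C'\delta<1/C-1/C'$; such a choice is possible because $\|T\|$ is finite. Since~$E$ is finite-dimensional, its unit sphere is compact, so I can fix finitely many unit vectors $f_1,\ldots,f_m\in E$ such that every unit vector of~$E$ lies within~$\delta$ of some~$f_i$. For each~$i$ the sets $\{n:\|S_n(f_i)\|<C+\epsilon\}$ and $\{n:\|TS_n(f_i)\|>1/C-\epsilon\}$ belong to~$\EuScript{U}$, by the limit relations above; their intersection over the finitely many indices~$i$ therefore also belongs to~$\EuScript{U}$ and is in particular non-empty. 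Fixing an index~$n$ lying in this intersection yields a single operator $S_n\colon E\to X$ satisfying $\|S_n(f_i)\|<C+\epsilon$ and $\|TS_n(f_i)\|>1/C-\epsilon$ simultaneously for every~$i$.

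Finally I would upgrade these finitely many estimates to estimates valid on the whole unit sphere of~$E$ by a standard perturbation argument. Given a unit vector $e\in E$, choose~$f_i$ with $\|e-f_i\|<\delta$; the bound $\|S_n(e)\|\le\|S_n(f_i)\|+\|S_n\|\,\delta<(C+\epsilon)+\|S_n\|\,\delta$ gives, after taking the supremum over all such~$e$, the inequality $\|S_n\|\le(C+\epsilon)/(1-\delta)\le C'$. Consequently $\|TS_n\|\le\|T\|C'$, and the reverse estimate $\|TS_n(e)\|\ge\|TS_n(f_i)\|-\|TS_n\|\,\delta>(1/C-\epsilon)-\|T\|C'\delta\ge 1/C'$ shows that $TS_n$ is bounded below by~$1/C'$. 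Thus $S_n$ has norm at most~$C'$ and $TS_n$ is bounded below by~$1/C'$, which is precisely the assertion that~$T$ $C'$-fixes a copy of~$E$. The only genuine difficulty is the passage from the pointwise (in~$e$) control furnished by~$\EuScript{U}$ to control that is uniform over the entire unit sphere; this is exactly what the compactness of that sphere and the finiteness of the net resolve, and it explains why the hypothesis that~$E$ be finite-dimensional is essential.
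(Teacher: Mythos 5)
Your proof is correct and takes essentially the same route as the paper's: both realise the witnessing operator $S$ through coordinate operators $E\to X$, use the closure of~$\EuScript{U}$ under finite intersections to find a single index at which the norm and lower-bound estimates hold simultaneously on a finite net in the unit sphere of~$E$, and then upgrade from the net to the whole sphere by a perturbation argument exploiting the strict gap $C<C'$. The only cosmetic difference is that the paper delegates this final upgrading step to the quantitative net lemma (Lemma~\ref{AKlemma11.1.11}), whereas you carry out the equivalent successive-approximation estimate inline.
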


To prove it, we shall require the following simple variant of
\cite[Lemma~11.1.11]{ak}, where we keep record of the constants
involved.

\begin{lemma}\label{AKlemma11.1.11}
  Let $T$ be an operator from a non-zero, finite-dimensional Banach
  space~$E$ into a Banach space~$X$, let~$N$ be a finite
  $\epsilon$-net in the unit sphere of~$E$ for some
  $\epsilon\in(0,1)$, and let $\eta\le\min_{x\in N}\| Tx\|$ and
  $\xi\ge \max_{x\in N}\| Tx\|$. Then
  \[ \frac{\eta - \epsilon(\xi + \eta)}{1-\epsilon}\| x\|\le \|Tx\|\le
  \frac{\xi}{1-\epsilon}\|x\|\qquad (x\in E). \]
\end{lemma}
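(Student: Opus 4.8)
The plan is to prove Lemma~\ref{AKlemma11.1.11} by a direct estimate, first on the unit sphere of~$E$ using the $\epsilon$-net~$N$, and then extending to all of~$E$ by homogeneity. Fix $x$ in the unit sphere of~$E$, and pick $y\in N$ with $\|x-y\|\le\epsilon$. For the upper bound, write $Tx = Ty + T(x-y)$ and estimate $\|Tx\|\le\|Ty\| + \|T(x-y)\|\le\xi + \|T(x-y)\|$. The term $\|T(x-y)\|$ is controlled by applying the (not yet established) upper estimate to the vector $x-y$, whose norm is at most~$\epsilon<1$; this gives $\|T(x-y)\|\le\frac{\xi}{1-\epsilon}\|x-y\|\le\frac{\epsilon\xi}{1-\epsilon}$, whence $\|Tx\|\le\xi\bigl(1+\frac{\epsilon}{1-\epsilon}\bigr)=\frac{\xi}{1-\epsilon}$. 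Since this bound is homogeneous and holds on the unit sphere, it holds for all $x\in E$.

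There is a mild circularity to sort out here: the upper estimate for $\|T(x-y)\|$ uses the upper estimate we are trying to prove. The clean way around this is to first prove the upper bound, proceeding as follows. Let $M=\sup\{\|Tx\| : x\in E,\ \|x\|\le 1\}=\|T\|$, which is finite as $E$ is finite-dimensional (and the supremum is attained, but we do not need that). For $x$ in the unit sphere, choosing $y\in N$ as above, $\|Tx\|\le\|Ty\|+\|T(x-y)\|\le\xi+M\|x-y\|\le\xi+M\epsilon$. Taking the supremum over such~$x$ yields $M\le\xi+M\epsilon$, and since $\epsilon<1$ we may rearrange to obtain $M\le\xi/(1-\epsilon)$; this is precisely the upper estimate, now established without circularity.

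For the lower bound, again fix $x$ in the unit sphere of~$E$ and $y\in N$ with $\|x-y\|\le\epsilon$. Then $\|Tx\|\ge\|Ty\|-\|T(x-y)\|\ge\eta-\|T\|\,\|x-y\|\ge\eta-\frac{\xi}{1-\epsilon}\,\epsilon$, using the upper estimate for~$\|T\|$ just proved. Simplifying, $\eta-\frac{\epsilon\xi}{1-\epsilon}=\frac{\eta(1-\epsilon)-\epsilon\xi}{1-\epsilon}=\frac{\eta-\epsilon(\xi+\eta)}{1-\epsilon}$, which is the claimed lower bound on the unit sphere. By homogeneity it extends to all $x\in E$, completing the proof.

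I do not anticipate any serious obstacle; the only subtlety is the order of the argument, namely proving the upper bound first (via the $\|T\|\le\xi+M\epsilon$ rearrangement) so that it is available when estimating the error terms $\|T(x-y)\|$ in both the refined upper bound and the lower bound. The hypothesis $\epsilon<1$ is exactly what is needed to make the rearrangement $M(1-\epsilon)\le\xi$ go through, and the hypothesis that~$E$ is non-zero and finite-dimensional guarantees that a finite $\epsilon$-net of the unit sphere exists and that $\|T\|<\infty$.
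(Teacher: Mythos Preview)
Your argument is correct. The paper does not actually supply a proof of this lemma: it is stated as a simple quantitative variant of \cite[Lemma~11.1.11]{ak} and then used immediately in the proof of Lemma~\ref{ultrapowerfixinglemma}. Your approach---first bounding $\|T\|$ via the self-referential inequality $M\le\xi+M\epsilon$, then using that bound to control the error term in the lower estimate---is exactly the standard one, and there is nothing to compare.

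One minor remark on presentation: the opening paragraph with its acknowledged circularity is unnecessary once you have the clean second paragraph; you could simply delete it. Also, finite-dimensionality of~$E$ is used only to ensure $\|T\|<\infty$ (which is automatic for a bounded operator anyway) and is really there so that the statement makes sense---the existence of the finite $\epsilon$-net is part of the hypothesis, not something you need to justify.
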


\begin{proof}[Proof of Lemma~{\normalfont{\ref{ultrapowerfixinglemma}}}.]
  We may suppose that $E$ is non-zero, so that~$E$ has a normalized
  basis $(e_j)_{j=1}^n$; denote by $(f_j)_{j=1}^n$ the corresponding
  coordinate functionals.  Choose \mbox{$C''\in(C,C')$}, and let~$N$ be a
  finite $\epsilon$-net in the unit sphere of~$E$, where
  \[ \epsilon = \frac{C'-C''}{C'(C'')^2\|T\|+C'-C''}\in(0,1). \] By
  the assumption, there is an operator $S\colon E\to X_{\EuScript{U}}$
  of norm at most~$C$ such that the composite
  operator~$T_{\EuScript{U}}S$ is bounded below by~$1/C$. For each
  $j\in\{1,\ldots,n\}$, let
  \mbox{$(x_{j,k})_{k\in\N}\in\ell_\infty(\N,X)$} be a representative
  of the equivalence class of~$Se_j$ in~$X_{\EuScript{U}}$.  Then, for
  each $x\in N$, we have
  \[ \lim_{k,\EuScript{U}}\biggl\|\sum_{j=1}^n\langle x,f_j\rangle
  x_{j,k}\biggr\| = \|S x\|\le C<C''\ \ \text{and}\ \
  \lim_{k,\EuScript{U}}\biggl\|\sum_{j=1}^n\langle x,f_j\rangle
  Tx_{j,k}\biggr\| = \|T_{\EuScript{U}}S x\|\ge \frac{1}{C} >
  \frac{1}{C''}. \] Since~$N$ is finite and~$\EuScript{U}$ is closed
  under finite intersections, the set
  \begin{equation}\label{ultralimits} 
    M = \biggl\{ k\in\N : \biggl\|\sum_{j=1}^n\langle x,f_j\rangle 
    x_{j,k}\biggr\|<C''\ \text{and}\ \biggl\|\sum_{j=1}^n\langle 
    x,f_j\rangle Tx_{j,k}\biggr\| > \frac{1}{C''}\ \ (x\in N)\biggr\}
  \end{equation} 
  belongs to~$\EuScript{U}$, and it is therefore non-empty; choose
  $k\in M$, and define a mapping \mbox{$R\colon E\to X$} by setting
  $Re_j = x_{j,k}$ for each $j\in\{1,\ldots,n\}$ and extending by
  linearity. The estimates given in~\eqref{ultralimits} together with
  Lemma~\ref{AKlemma11.1.11} and the choice of~$\epsilon$ imply that
  $\|R\|\le C''/(1-\epsilon)\le C'$ and~$TR$ is bounded below by \[
  \frac{1/C'' - \epsilon(\|T\|C''+1/C'')}{1-\epsilon} =
\frac{1}{C'}, \] so
  that~$T$ $C'$-fixes a copy of~$E$.
\end{proof}

\begin{definition}\label{defnfixingfamily}
  Let $\mathfrak{F}$ be a non-empty family of Banach spaces.  We say
  that an operator~$T$ \emph{fixes the family~$\mathfrak{F}$
    uni\-form\-ly} if there is a constant~$C\ge 1$ such that~$T$
  $C$-fixes a copy of each Banach space in~$\mathfrak{F}$.
\end{definition}

To state our next result concisely, it is convenient to introduce the
notation $E_p = \ell_p$ for $p\in[1,\infty)$ and $E_\infty = c_0$.

\begin{corollary}\label{fixinglemma}
Let $X$ and $Y$ be Banach spaces, let $T\in\mathscr{B}(X,Y)$, and let
$p\in[1,\infty]$. Then the following three conditions are equivalent:
  \begin{alphenumerate}
  \item\label{fixinglemma1} the operator $T$ fixes the family
    $\{\ell_p^n : n\in\N\}$ uniformly;
  \item\label{fixinglemma2} for every free ultrafilter~$\EuScript{U}$
    on~$\N$, the ultrapower $T_{\EuScript{U}}\colon X_{\EuScript{U}}\to
    Y_{\EuScript{U}}$ fixes a copy of~$E_p;$
  \item\label{fixinglemma3} there exists a free
    ultrafilter~$\EuScript{U}$ on~$\N$ such that the ultrapower
    $T_{\EuScript{U}}\colon X_{\EuScript{U}}\to Y_{\EuScript{U}}$ fixes
    the family $\{\ell_p^n : n\in\N\}$ uniformly.
\end{alphenumerate}
\end{corollary}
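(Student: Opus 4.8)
The plan is to establish the cyclic chain of implications \alphref{fixinglemma1}$\Rightarrow$\alphref{fixinglemma2}$\Rightarrow$\alphref{fixinglemma3}$\Rightarrow$\alphref{fixinglemma1}. The mechanism behind the first implication is the elementary observation that, for every free ultrafilter~$\EuScript{U}$ on~$\N$, the space~$E_p$ embeds isometrically into the ultraproduct $\bigl(\ell_p^n\bigr)_{\EuScript{U}}$: indeed, the map sending $a\in E_p$ to the equivalence class of the sequence of truncations $\bigl((a_1,\ldots,a_n)\bigr)_{n\in\N}$ is an isometry onto a (closed) subspace, because $\|(a_1,\ldots,a_n)\|\to\|a\|$ as $n\to\infty$ for each~$a\in E_p$, so that the ultralimit of these norms equals~$\|a\|$. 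Now assume \alphref{fixinglemma1}, with uniform constant $C\ge 1$, and for each $n\in\N$ choose an operator $S_n\colon\ell_p^n\to X$ of norm at most~$C$ such that $TS_n$ is bounded below by~$1/C$. Since $\sup_n\|S_n\|<\infty$, the family $(S_n)_{n\in\N}$ induces an ultraproduct operator $\widehat{S}\colon\bigl(\ell_p^n\bigr)_{\EuScript{U}}\to X_{\EuScript{U}}$ of norm at most~$C$, and the composite $T_{\EuScript{U}}\widehat{S}$ is the ultraproduct of the operators~$TS_n$, hence bounded below by~$1/C$ (an ultraproduct of operators sharing a common lower bound inherits that bound). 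Composing~$\widehat{S}$ with the isometric embedding $E_p\hookrightarrow\bigl(\ell_p^n\bigr)_{\EuScript{U}}$ thus exhibits $T_{\EuScript{U}}$ as $C$-fixing a copy of~$E_p$, which is \alphref{fixinglemma2}.

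For \alphref{fixinglemma2}$\Rightarrow$\alphref{fixinglemma3}, fix any free ultrafilter~$\EuScript{U}$ on~$\N$; by~\alphref{fixinglemma2} the ultrapower~$T_{\EuScript{U}}$ $C$-fixes a copy of~$E_p$ for some $C\ge 1$, and composing the witnessing operator $E_p\to X_{\EuScript{U}}$ with the isometric inclusion of~$\ell_p^n$ into~$E_p$ (as the span of the first~$n$ coordinates) shows that~$T_{\EuScript{U}}$ $C$-fixes a copy of~$\ell_p^n$ for every~$n$; hence~$T_{\EuScript{U}}$ fixes $\{\ell_p^n:n\in\N\}$ uniformly. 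Finally, for \alphref{fixinglemma3}$\Rightarrow$\alphref{fixinglemma1}, let~$\EuScript{U}$ be a free ultrafilter witnessing~\alphref{fixinglemma3}, with uniform constant~$C$; since each~$\ell_p^n$ is finite-dimensional, Lemma~\ref{ultrapowerfixinglemma} applies with $E=\ell_p^n$ and $C'=C+1$ and yields that~$T$ $(C+1)$-fixes a copy of~$\ell_p^n$ for every~$n\in\N$, so that~$T$ fixes $\{\ell_p^n:n\in\N\}$ uniformly.

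None of these steps presents a genuine obstacle: this corollary is really a soft consequence of the machinery already in place, and the one point requiring attention throughout is the uniformity of the factorization constants in~$n$, which is guaranteed by the quantitative formulation of Lemma~\ref{ultrapowerfixinglemma} together with the stability of norm bounds and lower bounds under the formation of ultraproducts. We remark in passing that one could equally well take $E_\infty=\ell_\infty$ in the statement, since $\ell_\infty^n$ embeds isometrically into both~$c_0$ and~$\ell_\infty$; the choice $E_\infty=c_0$ is made with a view to later applications, where the minimality of~$E_p$ is needed in order to invoke Proposition~\ref{SEoperatorideal}.
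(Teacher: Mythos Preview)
Your proof is correct and follows the same cyclic scheme \alphref{fixinglemma1}$\Rightarrow$\alphref{fixinglemma2}$\Rightarrow$\alphref{fixinglemma3}$\Rightarrow$\alphref{fixinglemma1} as the paper, with the implications \alphref{fixinglemma2}$\Rightarrow$\alphref{fixinglemma3} and \alphref{fixinglemma3}$\Rightarrow$\alphref{fixinglemma1} handled identically. The one genuine difference lies in \alphref{fixinglemma1}$\Rightarrow$\alphref{fixinglemma2}: both arguments form the ultraproduct operator $\widehat{S}=(\prod S_n)_{\EuScript{U}}\colon(\prod\ell_p^n)_{\EuScript{U}}\to X_{\EuScript{U}}$ and observe that $T_{\EuScript{U}}\widehat{S}$ is bounded below, but to find a copy of~$E_p$ inside $(\prod\ell_p^n)_{\EuScript{U}}$ the paper invokes the structure theorem that ultraproducts of $L_p(\mu)$-spaces (respectively $C(K)$-spaces) are again $L_p(\mu)$-spaces (respectively $C(K)$-spaces), and then the standard fact that any infinite-dimensional such space contains~$E_p$. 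Your route is more elementary: you simply write down the isometric embedding $a\mapsto\bigl[(a_1,\dots,a_n)_{n\in\N}\bigr]$ of~$E_p$ into the ultraproduct, using that $\|(a_1,\dots,a_n)\|\to\|a\|$ and that ultralimits along a free ultrafilter agree with ordinary limits when the latter exist. This avoids the black-box appeal to \cite[Theorem~8.7]{DJT} and is entirely self-contained; the paper's approach, by contrast, reveals more about the structure of the ultraproduct and would generalize immediately to other families whose ultraproducts are known to lie in a given class. Your closing remark that $E_\infty=\ell_\infty$ would work equally well for the equivalence, with $c_0$ chosen only for its minimality in Proposition~\ref{SEoperatorideal}, is also correct.
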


\begin{proof}
  \alphref{fixinglemma1}$\Rightarrow$\alphref{fixinglemma2}. Suppose
  that there exists a constant~$C\ge 1$ such that, for each $n\in\N$,
  we can find an operator $S_n\colon\ell_p^n\to X$ of norm at most~$C$
  such that the composite operator $TS_n$ is bounded below by~$1/C$,
  and let~$\EuScript{U}$ be a free ultrafilter on~$\N$. Then we have
  an operator $S = (\prod S_n)_{\EuScript{U}}$ of norm at most~$C$
  from the ultraproduct $(\prod\ell_p^n)_{\EuScript{U}}$ into the
  ultrapower~$X_\EuScript{U}$, and the composite operator
  $T_{\EuScript{U}}S$ is bounded below by~$1/C$.  For each~$n\in\N$,
  $\ell_p^n$ is an $L_p(\mu)$-space for $p<\infty$ and a $C(K)$-space
  for $p=\infty$, and these classes are preserved by ultraproducts
  (see, \emph{e.g.}, \cite[Theorem~8.7]{DJT}). Thus the domain of~$S$
  is an in\-finite-dimensional $L_p(\mu)$-space for $p<\infty$ and an
  in\-finite-dimensional $C(K)$-space for $p=\infty$, so that in
  either case it contains an isomorphic copy of~$E_p$. Taking an
  operator $R\colon E_p\to (\prod\ell_p^n)_{\EuScript{U}}$ which is
  bounded below, we see that $T_{\EuScript{U}}SR$ is also bounded
  below, so that~$T_{\EuScript{U}}$ fixes a copy of~$E_p$.

  The implication
  \alphref{fixinglemma2}$\Rightarrow$\alphref{fixinglemma3} is
  obvious, while
  \alphref{fixinglemma3}$\Rightarrow$\alphref{fixinglemma1} follows
  from Lemma~\ref{ultrapowerfixinglemma}.
\end{proof}

\begin{proof}[Proof of Theorem~{\normalfont{\ref{linftynsingopsIdeal}}}]
  The class~$\mathscr{S}_{\{\ell_p^n\: :\: n\in\N\}}$ is clearly
  closed under arbitrary compositions and contains all finite-rank
  operators, while Corollary~\ref{easyfact1cor} shows that it is
  closed in the operator norm. Now suppose that
  $S,T\in\mathscr{S}_{\{\ell_p^n\: :\: n\in\N\}}(X,Y)$ for some Banach
  spaces~$X$ and~$Y$. Corollary~\ref{fixinglemma} implies that
  $S_{\EuScript{U}},T_{\EuScript{U}}\in
  \mathscr{S}_{E_p}(X_{\EuScript{U}},Y_{\EuScript{U}})$ for every free
  ultrafilter~$\EuScript{U}$ on~$\N$, where $E_p = \ell_p$ for
  $p<\infty$ and $E_p = c_0$ for $p=\infty$.  Consequently, we have
  \mbox{$(S+T)_{\EuScript{U}} = S_{\EuScript{U}}+T_{\EuScript{U}}\in
    \mathscr{S}_{E_p}(X_{\EuScript{U}},Y_{\EuScript{U}})$} by
  Proposition~\ref{SEoperatorideal}, and hence another application of
  Corollary~\ref{fixinglemma} shows that
  $S+T\in\mathscr{S}_{\{\ell_p^n\: :\: n\in\N\}}(X,Y)$.
\end{proof}

\section{The proofs of Theorems~\ref{mainthm}
  and~\ref{uniquemaxideal}}\label{section3}
\noindent
We begin by establishing some lemmas and introducing some notation
that will be required in the proof of Theorem~\ref{mainthm}. Our first
lemma needs no proof: it follows immediately from the $1$-injectivity
of the Banach space~$\ell_\infty^n$.

\begin{lemma}\label{easyfact2}
  Let $n\in\N$, let $X$ be a Banach space, and let $T\colon
  \ell_\infty^n\to X$ be an operator which is bounded below
  by~$c>0$. Then $T$ has a left inverse~$X\to\ell_\infty^n$ of
  norm at most~$c^{-1}$. 
\end{lemma}

Our second lemma concerns strictly singular perturbations of operators
that fix~$\ell_p$ for some $p\in[1,\infty)$ or~$c_0$.

\begin{lemma}\label{ssperturbation}
  Let $X$ and $Y$ be Banach spaces, let $E = \ell_p$ for some
  $p\in[1,\infty)$ or $E = c_0$, let $C'>C\ge 1$, and let $S,T\colon
    X\to Y$ be operators, where $S$ is strictly singular and~$T$
    $C$-fixes a copy of~$E$. Then $S+T$ $C'$-fixes a copy of~$E$.
\end{lemma}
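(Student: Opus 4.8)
The plan is to exploit the fact that $E$ is isomorphic to its hyperplanes, and more specifically that $E\cong(E\oplus E\oplus\cdots)_{\ell_p}$ (or the $c_0$-sum when $E=c_0$), so that a copy of $E$ inside $X$ can be decomposed into infinitely many disjointly-supported further copies of $E$; on at least one of these the perturbing operator $S$ must be small, because $S$ is strictly singular. Concretely, since $T$ $C$-fixes a copy of~$E$, there is an operator $U\colon E\to X$ with $\|U\|\le C$ and $TU$ bounded below by~$1/C$. Fix a normalized unconditional basis $(e_n)$ of~$E$ witnessing the isomorphism $E\cong(\bigoplus_k E_k)$ with $E_k\cong E$, where $E_k=\clspa\{e_n:n\in I_k\}$ for a partition $\N=\bigcup_k I_k$ into infinite sets. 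Each restriction $U|_{E_k}\colon E_k\to X$ composed with the (uniformly bounded) isomorphism $E\to E_k$ gives an operator from~$E$ into~$X$ on which $TU$ is still bounded below by~$1/C$, so $T$ in fact $C$-fixes each $E_k$ with the \emph{same} constant.

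Next I would use that $SU|_{E_k}$ cannot be bounded below on any infinite-dimensional subspace for every~$k$ — indeed $SU$ is strictly singular since $S$ is — and apply Lemma~\ref{katolemma}: choosing $\epsilon>0$ small (depending only on $C,C'$, and $\|S\|$, via the same kind of estimate as in Corollary~\ref{easyfact1cor}), the operator $SU$, restricted to a suitable closed infinite-dimensional subspace $W\subseteq E$, has norm at most~$\epsilon$. Here is where the minimality of~$E$ (Pe\l czy\'nski's theorem that $c_0$ and $\ell_p$ are minimal) enters: after passing to a further subspace we may assume $W\cong E$, say via an isomorphism $V\colon E\to W\subseteq E$ with $\|V\|\,\|V^{-1}\|$ controlled. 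Replacing~$U$ by $UV$, we obtain an operator $R=UV\colon E\to X$ with $\|R\|\le C'$ (after absorbing the norm of~$V$ into the constant — this is the one routine place where one must be slightly careful about bookkeeping) such that $\|SR\|\le\epsilon$ while $TR$ is still bounded below by a constant close to~$1/C$.

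Finally, Lemma~\ref{easyfact1} applied to $(S+T)R$ and $TR$ gives that $(S+T)R$ is bounded below by $1/C-\epsilon\cdot\text{(controlled factor)}$, which by the choice of~$\epsilon$ is at least $1/C'$; since also $\|R\|\le C'$, this shows $S+T$ $C'$-fixes a copy of~$E$. The main obstacle is the constant bookkeeping: one must choose $\epsilon$ and the subspace~$W$ so that the norm of the composite isomorphism $E\to W$ does not push $\|R\|$ past~$C'$ while the lower bound stays above $1/C'$, and in the $c_0$-case one should note that Lemma~\ref{katolemma} is stated for general~$Y$ so no separate argument is needed. (In fact this lemma is essentially Corollary~\ref{easyfact1cor} combined with the observation that on a suitable copy of~$E$ the strictly singular part becomes negligible, so one could alternatively streamline the proof by invoking that corollary directly once $W\cong E$ with $\|S|_W\|$ small has been produced.)
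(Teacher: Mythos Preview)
Your overall strategy --- take $U\colon E\to X$ witnessing that $T$ $C$-fixes $E$, use Kato's lemma to find an infinite-dimensional subspace $W\subseteq E$ on which $SU$ has small norm, then use minimality of $E$ to find a copy of $E$ inside $W$ --- is exactly the paper's approach. The opening paragraph about splitting $E$ into blocks $E_k$ is a detour: the claim that ``on at least one of these the perturbing operator $S$ must be small'' does not follow from strict singularity (strict singularity gives no upper bound on $\|SU|_{E_k}\|$, only failure of a lower bound), and in any case you abandon this line and invoke Lemma~\ref{katolemma} directly, which is the right move.

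The one point that is \emph{not} routine bookkeeping, and which you wave away, is the control on the isomorphism $V\colon E\to W$. Mere minimality gives you some isomorphism constant, say $\|V\|\,\|V^{-1}\|\le K$; but then $\|UV\|\le CK$ and the lower bound for $TUV$ is only $1/(CK)$, so you can at best conclude that $S+T$ $CK$-fixes $E$ --- not $C'$-fixes for an \emph{arbitrary} $C'>C$. No amount of rescaling fixes this. What the paper uses (and what your sketch needs) is the sharper fact that every closed infinite-dimensional subspace of $\ell_p$ or $c_0$ contains \emph{almost isometric} copies of $E$: one can take $V$ with $\|V\|\le 1$ and $(1-\epsilon)\|x\|\le\|Vx\|$. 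With this, $\|R\|=\|UV\|\le C<C'$ automatically, and the lower bound on $(S+T)R$ comes out to exactly $(1-\epsilon)/C-\epsilon=1/C'$ for the choice $\epsilon=(C'-C)/C'(C+1)$. So the ``careful bookkeeping'' you flag is in fact the substantive step: you must invoke the almost-isometric form of Pe\l{}czy\'nski's theorem, not just minimality.
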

\begin{proof}
  By the assumption, we can choose an operator $R\colon E\to X$ such
  that $\|R\|\le C$ and $TR$ is bounded below by~$1/C$.  Set $\epsilon
  = (C'-C)/C'(C+1)\in(0,1)$.  Since~$SR$ is strictly singular,
  Lemma~\ref{katolemma} implies that $E$ contains a closed,
  in\-finite-di\-men\-sional subspace~$F$ such that
  \mbox{$\|SR|_F\|\le\epsilon$}.  Keeping careful track of the
  constants in the proof of Pe\l{}czy\'{n}ski's theorem that~$E$ is
  minimal, as it is given in \cite[Proposition~2.2.1]{ak}, for
  instance, as well as in the proof of \cite[Theorem~1.3.9]{ak}, we
  see that in fact every closed, infinite-dimensional sub\-space
  of~$E$ contains almost isometric copies of~$E$.  We can therefore
  find an operator $U\colon E\to F$ such that \mbox{$(1-\epsilon)\|
    x\|\le\|Ux\|\le\|x\|$} for each $x\in E$. Hence we have
  $\|RU\|\le\|R\|<C'$, \[\|SRU\|\le
  \|SR|_F\|\,\|U\|\le\epsilon\qquad\text{and}\qquad \|TRUx\|\ge
  \frac{1}{C}\|Ux\|\ge\frac{1-\epsilon}{C}\|x\|\qquad (x\in E), \] so
  that $(S+T)RU$ is bounded below by $(1-\epsilon)/C - \epsilon =
  1/C'$ by Lemma~\ref{easyfact1} and the choice of~$\epsilon$. This
  shows that $S+T$ $C'$-fixes a copy of~$E$.
\end{proof}

We shall next introduce some notation and terminology related to
Banach spaces of the form
\begin{equation}\label{defnX} X =
  \biggl(\bigoplus_{n\in\N} X_n\biggr)_{\ell_p} = \biggl\{ (x_n)_{n\in\N}:
  x_n\in X_n\ (n\in\N)\ \text{and}\
  \sum_{n=1}^\infty\|x_n\|^p<\infty\biggr\}, 
\end{equation} 
where $(X_n)_{n\in\N}$ is a sequence of Banach spaces and
$p\in[1,\infty)$. For each $n\in\N$, we write $\iota_n\colon X_n\to X$
and $\pi_n\colon X\to X_n$ for the canonical $n^{\text{th}}$
coordinate embedding and projection, respectively.  Given an
operator~$T$ on~$X$, we associate with it the $(\N\times\N)$-matrix
$(T_{j,k})$, where $T_{j,k} = \pi_jT\iota_k\colon X_k\to X_j$ for each
pair $j,k\in\N$. We say that $T$ has \emph{finite rows} if, for each
$j\in\N$, there exists $k_0\in\N$ such that $T_{j,k} = 0$ whenever
$k>k_0$, and that~$T$ has \emph{finite columns} if, for each $k\in\N$,
there exists $j_0\in\N$ such that $T_{j,k} = 0$ when\-ever $j>j_0$.

The following elementary perturbation result is a special case of
\cite[Lemma~2.7]{LLR}.
\begin{lemma}\label{finiterowsandcols}
  Let $T$ be an operator on a Banach space~$X$ of the
  form~\eqref{defnX}, where $X_n$ is fi\-nite-di\-men\-sional for each
  $n\in\N$ and $p\in(1,\infty)$. Then, for each $\epsilon>0$, there
  exists an operator \mbox{$T'\in\mathscr{B}(X)$} with finite rows and
  finite columns such that the operator $T-T'$ is approximable and has
  norm at most~$\epsilon$.
\end{lemma}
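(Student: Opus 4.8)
The plan is to re-block $X$ along a sufficiently fast-growing sequence of indices and to retain only a \emph{block-tridiagonal} part of~$T$, discarding a remainder that is simultaneously approximable and of small norm. Throughout, write $P_N=\sum_{n=1}^N\iota_n\pi_n\in\mathscr{B}(X)$ for the $N^{\text{th}}$ partial-sum projection associated with the decomposition~\eqref{defnX}; these operators have norm~$1$ and converge strongly to the identity because $p<\infty$. The place where the hypothesis $p\in(1,\infty)$ enters is the following: for $p>1$ we have $X^*\cong\bigl(\bigoplus_{n\in\N}X_n^*\bigr)_{\ell_q}$ with $q\in(1,\infty)$, so the finite-dimensional decomposition $(X_n)_{n\in\N}$ of~$X$ is \emph{shrinking;} that is, $\|\varphi-\varphi\circ P_N\|\to 0$ as $N\to\infty$ for every $\varphi\in X^*$.

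First I would record two elementary facts about a finite-rank operator~$F$ on~$X$. On the one hand, $\|(I-P_N)F\|\to 0$ as $N\to\infty$, since $F(B_X)$ is relatively compact and $I-P_N\to 0$ strongly, hence uniformly on compact sets. On the other hand, $\|F(I-P_N)\|\to 0$ as $N\to\infty$: writing $Fx=\sum_{l=1}^r\varphi_l(x)y_l$ with $\varphi_l\in X^*$ and $y_l\in X$, one has $F(I-P_N)x=\sum_{l}(\varphi_l-\varphi_l\circ P_N)(x)\,y_l$, so that $\|F(I-P_N)\|\le\sum_l\|\varphi_l-\varphi_l\circ P_N\|\,\|y_l\|\to 0$ because the decomposition is shrinking. (It is precisely this second fact that fails when $p=1$.)

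Now fix $\epsilon>0$ and construct a strictly increasing sequence $0=N_0<N_1<N_2<\cdots$ recursively, starting from $N_1=1$, say: once $N_0,\dots,N_m$ have been chosen (for some $m\ge 1$), the block projection $R_i:=P_{N_i}-P_{N_{i-1}}$ is determined for $i\le m$, and it has finite rank because each~$X_n$ is finite-dimensional; applying the two facts above to the fixed finite-rank operators $R_mT$ and $TR_m$, choose $N_{m+1}>N_m$ so large that
\[ \|R_mT(I-P_{N_{m+1}})\|\le\epsilon 2^{-m-2}\qquad\text{and}\qquad \|(I-P_{N_{m+1}})TR_m\|\le\epsilon 2^{-m-2}. \]
Put $A_m=R_mT(I-P_{N_{m+1}})$ and $B_m=(I-P_{N_{m+1}})TR_m$ for each $m\in\N$; each of these operators is finite-rank (it factors through $R_mX$), and the series $U^{(1)}:=\sum_m A_m$ and $U^{(2)}:=\sum_m B_m$ converge absolutely in operator norm with $\|U^{(1)}\|,\|U^{(2)}\|\le\epsilon/4$, so that $U:=U^{(1)}+U^{(2)}$ is approximable and $\|U\|\le\epsilon/2$. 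Finally set $T':=T-U$; then $T-T'=U$ is approximable of norm at most~$\epsilon$, and it remains only to verify that~$T'$ has finite rows and finite columns. Since composition is norm-continuous and the~$R_i$ are pairwise-orthogonal idempotents, $R_iU^{(1)}R_j=\sum_m R_iA_mR_j=R_iT(I-P_{N_{i+1}})R_j$, which equals $R_iTR_j$ if $j\ge i+2$ and is~$0$ otherwise; symmetrically $R_iU^{(2)}R_j$ equals $R_iTR_j$ if $i\ge j+2$ and is~$0$ otherwise. Hence $R_iT'R_j=R_iTR_j$ whenever $|i-j|\le 1$ and $R_iT'R_j=0$ whenever $|i-j|\ge 2$, i.e.\ $T'$ is block-tridiagonal with respect to the decomposition $X=\bigl(\bigoplus_i R_iX\bigr)_{\ell_p}$. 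As each block $R_iX=X_{N_{i-1}+1}\oplus\cdots\oplus X_{N_i}$ involves only finitely many of the~$X_n$, this says exactly that~$T'$ has finite rows and finite columns.

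The only genuine obstacle is the second elementary fact, $\|F(I-P_N)\|\to 0$ for finite-rank~$F$: this is false when $p=1$ (where the decomposition is not shrinking), and it is the single place in the argument where $p\in(1,\infty)$ is used. The rest is the standard ``gliding-hump'' bookkeeping, made to work by the fact that each~$N_{m+1}$ is chosen only after the block~$R_m$ has been fixed.
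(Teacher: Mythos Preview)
Your argument is correct. The paper itself does not prove this lemma: it simply states that the result is a special case of \cite[Lemma~2.7]{LLR}, so there is no in-paper proof to compare against. What you have supplied is a clean, self-contained version of the standard block-tridiagonal truncation argument: re-block along a fast-growing sequence $(N_m)$, discard the off-tridiagonal tails $A_m=R_mT(I-P_{N_{m+1}})$ and $B_m=(I-P_{N_{m+1}})TR_m$, and observe that these tails sum absolutely to an approximable operator of small norm while the remainder $T'=T-U$ is block-tridiagonal, hence has finite rows and columns. Your identification of the one place where $p\in(1,\infty)$ is genuinely needed---namely that the finite-dimensional decomposition is shrinking, so that $\|F(I-P_N)\|\to 0$ for finite-rank~$F$---is exactly right, and your verification that $R_iT'R_j=0$ for $|i-j|\ge 2$ via the orthogonality $R_iR_m=\delta_{im}R_i$ is accurate. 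The only cosmetic point is that you obtain $\|T-T'\|\le\epsilon/2$ rather than~$\epsilon$, which is of course stronger than required.
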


Set $P_0 = 0$ and $P_n = \sum_{j=1}^n \iota_j\pi_j$ for $n\in\N$. We
can then state our final lemma as follows.
\begin{lemma}\label{easyfact3}
  Let~$X$ be a Banach space of the form~\eqref{defnX}, let \mbox{$0\le
    k_1<k_1'\le k_2<k_2'\le\cdots$} be an increasing sequence of
  integers, and let $(R_n\colon X_n\to X)_{n\in\N}$ and $(S_n\colon
  X\to X_n)_{n\in\N}$ be uniformly bounded sequences of
  operators. Then
  \begin{equation}\label{DEfnRS} 
    R\colon\ (x_n)_{n\in\N}\mapsto\sum_{n=1}^\infty
    (P_{k_n'}-P_{k_n})R_nx_n\qquad\text{and}\qquad S\colon x\mapsto
    (S_n(P_{k_n'}-P_{k_n})x)_{n\in\N} 
  \end{equation} 
  define operators on~$X$ of norms at most $\sup_{n\in\N}\|R_n\|$ and
  $\sup_{n\in\N}\|S_n\|$, respectively.
\end{lemma}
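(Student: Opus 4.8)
The plan is to exploit the one structural fact that makes the lemma work: since $k_n'\le k_{n+1}$ for every $n\in\N$, the ``blocks'' of coordinates $B_n=\{k_n+1,\ldots,k_n'\}$ are pairwise disjoint, so the operators $P_{k_n'}-P_{k_n}$ are norm-one (or zero, when $B_n=\emptyset$) projections with mutually disjoint ``supports''. Everything then reduces to the elementary observation that in an $\ell_p$-sum of the form~\eqref{defnX}, vectors supported on disjoint blocks of coordinates combine in the $\ell_p$-fashion. Concretely, for $y=(y_j)_{j\in\N}\in X$ one has $(P_{k_n'}-P_{k_n})y=\sum_{j\in B_n}\iota_jy_j$, and hence $\|(P_{k_n'}-P_{k_n})y\|^p=\sum_{j\in B_n}\|y_j\|^p$; moreover, whenever vectors $z_1,z_2,\ldots\in X$ are supported on pairwise disjoint subsets of~$\N$, the series $\sum_m z_m$ converges in~$X$ provided that $\sum_m\|z_m\|^p<\infty$, in which case $\|\sum_m z_m\|^p=\sum_m\|z_m\|^p$.

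Granting this, the bound on~$S$ is immediate. Given $x=(x_j)_{j\in\N}\in X$, we have $\|S_n(P_{k_n'}-P_{k_n})x\|\le\|S_n\|\,\|(P_{k_n'}-P_{k_n})x\|$ for each~$n$, and since the blocks~$B_n$ are pairwise disjoint,
\[
\sum_{n=1}^\infty\|S_n(P_{k_n'}-P_{k_n})x\|^p\le\bigl(\sup_{n\in\N}\|S_n\|\bigr)^p\sum_{n=1}^\infty\sum_{j\in B_n}\|x_j\|^p\le\bigl(\sup_{n\in\N}\|S_n\|\bigr)^p\|x\|^p.
\]
Thus $Sx$ indeed lies in~$X$, with $\|Sx\|\le\sup_{n\in\N}\|S_n\|\,\|x\|$, and linearity of~$S$ is clear. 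For~$R$, given $x=(x_n)_{n\in\N}\in X$, the vectors $(P_{k_n'}-P_{k_n})R_nx_n$ are supported on the pairwise disjoint blocks~$B_n$, and $\|(P_{k_n'}-P_{k_n})R_nx_n\|\le\|R_n\|\,\|x_n\|$, so that $\sum_n\|(P_{k_n'}-P_{k_n})R_nx_n\|^p\le\bigl(\sup_{n\in\N}\|R_n\|\bigr)^p\sum_n\|x_n\|^p<\infty$. By the observation above, the series defining $Rx$ converges in~$X$ and $\|Rx\|^p=\sum_n\|(P_{k_n'}-P_{k_n})R_nx_n\|^p\le\bigl(\sup_{n\in\N}\|R_n\|\bigr)^p\|x\|^p$, while linearity of~$R$ is again immediate.

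There is no genuine obstacle here; the lemma is essentially bookkeeping. The only points that require a moment's care are that the hypothesis $k_n'\le k_{n+1}$ is exactly what makes the blocks~$B_n$ pairwise disjoint (so that the $\ell_p$-additivity applies), that one should allow $B_n=\emptyset$, in which case $P_{k_n'}-P_{k_n}=0$ and contributes nothing, and that the convergence of the series defining~$R$ must be justified, which follows from the partial sums being Cauchy — again a direct consequence of the disjointness of the supports together with $\sum_n\|x_n\|^p<\infty$.
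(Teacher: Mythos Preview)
Your proof is correct and follows essentially the same approach as the paper's: both arguments rest on the pairwise disjointness of the blocks $B_n$ (guaranteed by $k_n'\le k_{n+1}$) and the resulting $\ell_p$-additivity of the norm, with the bound for~$S$ being identical in both treatments and the argument for~$R$ differing only in presentation (the paper writes out the limit vector~$y$ coordinate-wise and verifies convergence of the partial sums explicitly, whereas you invoke the general principle about disjointly supported vectors). One minor remark: the hypothesis $k_n<k_n'$ ensures that $B_n$ is never empty, so your caveat about that case is unnecessary, though harmless.
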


\begin{proof} Set $C_1=\sup_{n\in\N}\|R_n\|$ and
  $C_2=\sup_{n\in\N}\|S_n\|$, and let $x = (x_n)_{n\in\N}\in X$ be
  given. We must show that the proposed definitions~\eqref{DEfnRS} of
  $Rx$ and $Sx$ belong to~$X$ and have norms at most $C_1\|x\|$ and
  $C_2\|x\|$, respectively; the result will then follow because~$R$
  and~$S$ are easily seen to be linear. The required estimate for~$S$
  is straightforward:
  \[ \sum_{n=1}^\infty \|S_n(P_{k_n'}-P_{k_n})x\|^p\le
  C_2^p\sum_{n=1}^\infty \|(P_{k_n'}-P_{k_n})x\|^p\le C_2^p\|x\|^p. \]
  Concerning~$R$, we define $y_j\in X_j$ for each $j\in\N$ as follows:
  $y_j = \pi_jR_nx_n$ if $k_n<j\le k_n'$ for some (necessarily unique)
  $n\in\N$, and $y_j = 0$ otherwise. Then, for each $m\in\N$, we have
  \[ \sum_{j=k_m+1}^\infty \|y_j\|^p =
  \sum_{n=m}^\infty\sum_{j=k_m+1}^{k_m'} \|\pi_jR_nx_n\|^p =
  \sum_{n=m}^\infty \|(P_{k_n'}-P_{k_n})R_nx_n\|^p\le
  C_1^p\|(I_X-P_{m-1})x\|^p. \] Taking $m=1$, we see that $y$ belongs
  to~$X$ with norm at most $C_1\|x\|$. Moreover, we deduce that the
  series $\sum_{n=1}^\infty (P_{k_n'}-P_{k_n})R_nx_n$ is convergent
  with sum~$y$ because
  \[ \biggl\|y - \sum_{n=1}^m (P_{k_n'}-P_{k_n})R_nx_n\biggr\|^p =
  \sum_{j=k_{m+1}+1}^\infty \|y_j\|^p\le C_1^p\|(I_X-P_m)x\|^p\to
  0\quad\text{as}\quad m\to\infty, \] so that $Rx = y$, and the
  conclusion follows.
\end{proof}

\begin{proof}[Proof of Theorem~{\normalfont{\ref{mainthm}}}]
  The implication $\Leftarrow$ is easy to verify. Suppose that
  $I_{W_p} = STR$ for some operators $R,S\in\mathscr{B}(W_p)$, and let
  $C = \sqrt{\|R\|\,\|S\|}$.  By replacing~$R$ and $S$ with $CR/\|R\|$
  and $CS/\|S\|$, respectively, we may suppose that $\|R\| = \|S\| =
  C$. Then, for each $n\in\N$, the composite operator
  $TR\iota_n\colon\ell_\infty^n\to W_p$ is bounded below by~$1/\|S\| =
  1/C$ and \mbox{$\| R\iota_n\|\le\|R\| = C$}, so that~$T$
  $C$-fixes~$\ell_\infty^n$.

  Conversely, suppose that~$T$ fixes the family $\{\ell_\infty^n :
  n\in\N\}$ uniformly.  We may without loss of generality suppose that
  $\|T\|=1$.  Take a free ultrafilter~$\EuScript{U}$
  on~$\N$. Corollary~\ref{fixinglemma} shows that the
  ultrapower~$T_{\EuScript{U}}$ $C$-fixes a copy of~$c_0$ for some
  $C\ge 1$. Choose constants $C_1>C_2>C_3>C_4>C$, and set $\epsilon =
  \min\{(C_4-C)/C^2C_4,1/C_1^{2}\}\in(0,1)$. By
  Lemma~\ref{finiterowsandcols}, we can find an operator
  $T'\in\mathscr{B}(W_p)$ with finite rows and columns such that $\|T
  - T'\|<\epsilon/2$. Set $T'' = T'/\|T'\|$. Since \[ \|
  T_{\EuScript{U}} - T''_{\EuScript{U}}\| = \|T-T''\|\le \|T-T'\| +
  \biggl\|\biggl(1 - \frac{1}{\|T'\|}\biggr)T'\biggr\| = \|T-T'\| +
  \bigl|\|T'\|-\|T\|\bigr| <\epsilon\le \frac{C_4-C}{C^2C_4}, \]
  Corollary~\ref{easyfact1cor} implies that $T''_{\EuScript{U}}$
  $C_4$-fixes a copy of~$c_0$.

  By induction, we shall construct sequences $0 = k_0 = k_0'\le
  k_1<k_1'\le k_2<k_2'\le\cdots$ and $0=m_0<m_1<m_2<\cdots$ of
  integers and sequences $(R_n\colon\ell_\infty^n\to W_p)_{n\in\N_0}$
  and \mbox{$(S_n\colon W_p\to\ell_\infty^n)_{n\in\N_0}$} of
  operators, each having norm at most~$C_1$, such that
  \begin{equation}\label{inductionclaim1}
    (I_{W_p} - P_{m_n})T''P_{k_n'} = 0 =
    P_{m_{n-1}}T''(I_{W_p}-P_{k_n}) \end{equation} and the diagram
  \begin{equation}\label{inductionclaim2} 
\spreaddiagramcolumns{-3ex}%
    \xymatrix{%
      & \ell_\infty^n\ar^-{\displaystyle{I_{\ell_\infty^n}}}[rr]%
      \ar_-{\displaystyle{R_n}}[ld]%
      &&  \ell_\infty^n\\
      W_p\ar_-{\displaystyle{P_{k_n'} - P_{k_n}}}[rd] &&&&
      W_p\ar_-{\displaystyle{S_n}}[lu]\\
      & W_p\ar^-{\displaystyle{T''}}[rr] &&
      W_p\ar_-{\displaystyle{P_{m_n} - P_{m_{n-1}}}}[ru]} \end{equation} is
  commutative for each $n\in\N$.

  The only reason that we have included the case $n=0$ is that it
  makes the start of the induction trivial (whereas if we began with
  $n=1$, we would need to carry out a small amount of checking, which
  would duplicate parts of the induction step).  Indeed, we can simply
  take $R_0 = S_0 =0$ (as well as $k_0 = k_0' = m_0 = 0$, as already
  stated).

  Now assume that, for some $N\in\N_0$, integers $0 = k_0 = k_0'\le
  k_1<k_1'\le\cdots\le k_N<k_N'$ and $0=m_0<m_1<\cdots<m_N$ and
  operators $(R_n\colon\ell_\infty^n\to W_p)_{n=0}^N$ and $(S_n\colon
  W_p\to\ell_\infty^n)_{n=0}^N$ of norms at most~$C_1$ have been
  chosen in accordance
  with~\eqref{inductionclaim1}--\eqref{inductionclaim2}. Since~$T''$
  has finite rows, we can choose $k_{N+1}\ge k_N'$ such that
  $T''_{r,s} = 0$ whenever $1\le r\le m_N$ and $s>k_{N+1}$. Then we
  have $P_{m_N}T''(I_{W_p}-P_{k_{N+1}}) = 0$. For convenience, set
  $T''_{N+1} = (I_{W_p} - P_{m_N})T''(I_{W_p}-P_{k_{N+1}})$. This is a
  finite-rank perturbation of~$T''$, and
  consequently~$(T''_{N+1})_{\EuScript{U}}$ is a finite-rank
  perturbation of~$T''_{\EuScript{U}}$ because ultrapowers of
  finite-rank operators have finite rank. Hence
  Lemma~\ref{ssperturbation} implies that~$(T''_{N+1})_{\EuScript{U}}$
  $C_3$\nobreakdash-fixes a copy of~$c_0$, and thus
  of~$\ell_\infty^{N+1}$. This, in turn, means that~$T''_{N+1}$
  $C_2$-fixes a copy of~$\ell_\infty^{N+1}$ by
  Lemma~\ref{ultrapowerfixinglemma}; that is, we can find an operator
  $R_{N+1}\colon\ell_\infty^{N+1}\to W_p$ of norm at most~$C_2$ such
  that $T''_{N+1}R_{N+1}$ is bounded below by~$1/C_2$. The fact
  that~$R_{N+1}$ has finite rank means that we can take
  $k_{N+1}'>k_{N+1}$ such that $\|(I_{W_p}-P_{k_{N+1}'})R_{N+1}\|\le
  1/C_2 - 1/C_1$. Lemma~\ref{easyfact1} then implies that $(I_{W_p} -
  P_{m_N})T''(P_{k_{N+1}'}-P_{k_{N+1}})R_{N+1}$ is bounded below
  by~$1/C_1$. Since~$T''$ has finite columns, we can choose
  $m_{N+1}>m_N$ such that $T''_{r,s}=0$ whenever $r>m_{N+1}$ and $1\le
  s\le k_{N+1}'$. Then we have $(I_{W_p} - P_{m_{N+1}})T''P_{k_{N+1}'}
  = 0$, and consequently \[ (P_{m_{N+1}} -
  P_{m_N})T''(P_{k_{N+1}'}-P_{k_{N+1}})R_{N+1} = (I_{W_p} -
  P_{m_N})T''(P_{k_{N+1}'}-P_{k_{N+1}})R_{N+1}, \] which is bounded
  below by~$1/C_1$, so Lemma~\ref{easyfact2} gives an operator
  $S_{N+1}\colon W_p\to\ell_\infty^{N+1}$ of norm at most~$C_1$ such
  that the diagram~\eqref{inductionclaim2} commutes for $n=N+1$.
  Hence the induction continues.

  As in Lemma~\ref{easyfact3}, we can now define operators $R,S\colon
  W_p\to W_p$ of norms at most~$C_1$ by
  \[ Rx = \sum_{n=1}^\infty (P_{k_n'} - P_{k_n})R_nx_n\quad
  \text{and}\quad Sx = (S_n(P_{m_n} - P_{m_{n-1}})x)_{n\in\N}\quad (x =
  (x_n)_{n\in\N}\in W_p). \] Then, for each $r,s\in\N$, we have
  \[ \pi_r(ST''R)\iota_s(x) = S_r(P_{m_r} - P_{m_{r-1}})T''(P_{k_s'} -
  P_{k_s})R_sx = \begin{cases} x\ &\text{if}\ r=s\\ 0\
    &\text{otherwise}\end{cases}\qquad (x\in\ell_\infty^s) \]
  by~\eqref{inductionclaim1}--\eqref{inductionclaim2}, and therefore
  $ST''R = I_{W_p}$. Since
  \[ \|STR - I_{W_p}\|\le \|S\|\,\|T-T''\|\, \|R\|<C_1^2\epsilon\le
  1 \] by the choice of~$\epsilon$, we conclude that the operator
  $STR$ is invertible, and the result follows.
\end{proof}

\begin{proof}[Proof of Theorem~{\normalfont{\ref{uniquemaxideal}}}]
  Theorem~\ref{mainthm} shows that~$\mathscr{M}_{W_p} =
  \mathscr{S}_{\{\ell_\infty^n\: :\: n\in\N\}}(W_p)$, which is an
  ideal by Theorem~\ref{linftynsingopsIdeal}, and it is therefore the
  unique maximal ideal of~$\mathscr{B}(W_p)$ by the observation of
  Dosev and Johnson that was stated in the Introduction.

  The Banach space~$W_p$ is reflexive because $p\in(1,\infty)$. Hence
  the mapping $T\mapsto T^*$, which maps an operator~$T$ to its
  adjoint~$T^*$, is a linear, anti-multiplicative, isometric bijection
  of the Banach algebra~$\mathscr{B}(W_p)$
  on\-to~$\mathscr{B}(W_p^*)$, and so it induces an order isomorphism
  between the lattices of ideals of these two Banach algebras. In
  particular, the image under this mapping of the unique maximal
  ideal~$\mathscr{M}_{W_p}$ of~$\mathscr{B}(W_p)$ is the unique
  maximal ideal of~$\mathscr{B}(W_p^*)$, and this ideal is given by
  \[ \{T^* : I_{W_p}\neq STR\ (R,S\in\mathscr{B}(W_p))\} = \{T^* :
  I_{W_p^*}\neq R^*T^*S^*\ (R,S\in\mathscr{B}(W_p))\} =
  \mathscr{M}_{W_p^*}. \qedhere\]
\end{proof}

\section*{Acknowledgements} 
\noindent The research on which this paper is based was initiated at
the Fields Institute in Toronto, Canada, while both authors were
participating in the \emph{Thematic Program on Abstract Harmonic
  Analysis, Banach and Operator Algebras}.  We are grateful to the
lead and session organizers (H.~G.~Dales, G.~A.~Elliott, A.~T.--M.~Lau
and M.~Neufang) for the invitation to take part and the financial
support that we received, and to everyone at Fields for their kind
hospitality, which made our stay enjoyable, stimulating and
productive.

The first author was supported by a postdoctoral fellowship from the
Warsaw Center of Mathematics and Computer Science, while Lancaster
University supported the second author's travel. We gratefully
acknowledge this support.

\bibliographystyle{amsplain}

\begin{thebibliography}{99}
\bibitem{ak} F.~Albiac and N.~J.~Kalton, \emph{Topics in Banach space
  theory,} Grad.\ Texts Math.~233, Springer-Verlag, New York, 2006.

\bibitem{AS} G.~Androulakis and Th.~Schlumprecht, The Banach space~$S$
  is complementably minimal and sub\-sequen\-tially prime, \emph{Studia
    Math.}~\textbf{156} (2003), 227--242.

\bibitem{cjt} P.~G.~Casazza, W.~B.~Johnson and L.~Tzafriri, On
  Tsirelson's space, \emph{Israel J.~Math.}~\textbf{47} (1984),
  81--98.

\bibitem{co} P.~G.~Casazza and E.~Odell, Tsirelson's space and minimal
  subspaces, \emph{Texas Functional Analysis Seminar 1982--1983},
  Longhorn Notes, Univ.\ Texas Press, Austin, TX, 1983, 61--72.

\bibitem{DJT} J.~Diestel, H.~Jarchow, and A.~Tonge, \emph{Absolutely
  summing operators,} Camb.\ Stud.\ Adv.\ Math.~43, Cambridge
  University Press, Cambridge, 1995.

\bibitem{dj} D.~Dosev and W.~B.~Johnson, Commutators on $\ell_\infty$,
  \emph{Bull.\ London Math.\ Soc.}~\textbf{42} (2010), 155--169.

\bibitem{fj} T.~Figiel and W.~B.~Johnson, A uniformly convex Banach
  space which contains no~$\ell_p$, \emph{Compositio
    Math.}~\textbf{29} (1974), 179--190.

\bibitem{KLjfa} T.~Kania and N.~J.~Laustsen, Uniqueness of the maximal
  ideal of the Banach algebra of bounded operators
  on~$C([0,\omega_1])$, \emph{J.~Funct.\ Anal.}~\textbf{262} (2012),
  4831--4850.

\bibitem{kato} T.~Kato, Perturbation theory for nullity, deficiency
  and other quantities of linear operators, \emph{J.~Analyse
    Math.}~\textbf{6} (1958), 261--322.

\bibitem{LLR} N.~J.~Laustsen, R.~J.~Loy and C.~J.~Read, The lattice of
  closed ideals in the {B}anach algebra of operators on certain Banach
  spaces, \emph{J.~Funct.\ Anal.}~\textbf{214}, (2004), 106--131.

\bibitem{losz} N.~J.~Laustsen, E.~Odell, Th.~Schlumprecht and
  A.~Zs{\'a}k, Dichotomy theorems for random matrices and closed
  ideals of operators on
  $\bigl(\bigoplus_{n=1}^\infty\ell_1^n\bigr)_{c_0}$, \emph{J.~London
    Math.\ Soc.}~\textbf{86} (2012) 235--258.

\bibitem{leung} D.~Leung, Ideals of operators on
  $(\oplus_n\ell^\infty(n))_{\ell^1}$, to appear in
  \emph{Proc.\ Amer.\ Math.\ Soc.}; arXiv:1310.7352.

\bibitem{lt1} J.~Lindenstrauss and L.~Tzafriri, \emph{Classical Banach
  spaces~I}, Ergeb.\ Math.\ Grenz\-geb.~{92}, Springer-Verlag,
  Berlin--New York, 1977.

\bibitem{pelc} A.~Pe\l{}czy\'{n}ski, Projections in certain Banach
  spaces, \emph{Studia Math.}~\textbf{19} (1960), 209--228.

\bibitem{stephani} I.~Stephani, Operator ideals generalizing the ideal
  of strictly singular operators, \emph{Math.\ Nachr.}~\textbf{94}
  (1980), 29--41.

\end{thebibliography}

\end{document}